\newtheorem{theorem}{Theorem}[section]
\newtheorem{lemma}[theorem]{Lemma}
\newtheorem{conjecture}[theorem]{Conjecture}
\newtheorem{claim}[theorem]{Claim}
\newtheorem{probs}[theorem]{Problem}
\theoremstyle{definition}
\newtheorem{definition}[theorem]{Definition}
\newtheorem{remark}[theorem]{Remark}
\numberwithin{equation}{section}
\DeclareMathOperator{\dist}{dist}
\DeclareMathOperator{\conv}{conv}
\renewcommand{\epsilon}{\varepsilon}
\renewcommand{\phi}{\varphi}
\renewcommand{\kappa}{\varkappa}
\newcommand{\df}[1]{\emph{\color{black!75!blue}#1}}
\newcommand{\R}{\mathbb R}
\newcommand{\mrd}{M_{r,d}}
\newcommand{\xm}{X_{[m]}}
\begin{document}
\title{Topology of geometric joins}
\author{Imre~B\'ar\'any$^\star$}

\address{\linebreak Imre~B\'{a}r\'{a}ny \hfill \hfill \linebreak
Alfr\'{e}d R\'{e}nyi Mathematical Institute, \hfill \hfill \linebreak
 Hungarian Academy of Sciences, \hfill \hfill \linebreak
P.O. Box 127, 1364 Budapest,  Hungary \hfill \hfill \linebreak
and \linebreak
Department of Mathematics, \hfill \hfill \linebreak
University College London, \hfill \hfill \linebreak
Gower Street, London WC1E 6BT, UK}

\email{barany.imre@renyi.mta.hu}

\thanks{{$^\star$}Partially supported by ERC
  Advanced Research Grant no 267165 (DISCONV), and by Hungarian
  National Research Grant K 83767.} 

\author{Andreas~F.~Holmsen$^\diamond$}
\address{\linebreak Andreas~F.~Holmsen \hfill \hfill \linebreak  
Department of Mathematical Sciences, \hfill \hfill\linebreak
KAIST,\hfill \linebreak 
291 Daehak-ro, Daejeon 305-701, South Korea} 

\email{andreash@kaist.edu}

\thanks{{$^\diamond$}Supported  by Basic Science Research Program through the
National Research Foundation of Korea funded by the Ministry of
Education, Science and Technology (NRF-2010-0021048).}

\author{Roman~Karasev$^{\dagger}$}
\address{\linebreak Roman Karasev \hfill \hfill \linebreak
Dept. of Mathematics, \hfill\hfill\linebreak
Moscow Institute of Physics and Technology, \hfill\hfill \linebreak
Institutskiy per. 9, Dolgoprudny, Russia 141700 \hfill\hfill \linebreak
and \linebreak
Institute for Information Transmission Problems RAS, \hfill\hfill \linebreak
Bolshoy Karetny per. 19, \hfill\hfill \linebreak
Moscow, Russia 127994}

\email{r\_n\_karasev@mail.ru}
\urladdr{http://www.rkarasev.ru/en/}

\thanks{{$^{\dagger}$}Supported by the Dynasty foundation.}

\begin{abstract}
We consider the geometric join of a family of subsets of the Euclidean space. This is a construction frequently used in the (colorful) Carath\'eodory and Tverberg theorems, and their relatives. We conjecture that when the family has at least $d+1$ sets, where $d$ is the dimension of the space, then the geometric join is contractible. We are able to prove this when $d$ equals $2$ and $3$, while for larger $d$ we show that the geometric join is contractible provided the number of sets is quadratic in $d$. We also consider a matroid generalization of geometric joins and provide similar bounds in this case.
\end{abstract}

\maketitle

\section{Introduction}

The purpose of this paper is to introduce the notion of a geometric join, and to study its topological connectedness. The geometric join is a natural object which appears in the proof of the colorful Carath\'eodory theorem~\cite{bar1982} and Tverberg's theorem~\cite{tver1966}; see chapter 8 in \cite{mat2002} for a detailed explanation. Recently, it was also shown in \cite{hol2014} that the colorful version of Hadwiger's transversal theorem \cite{aro2009} is closely related to the connectedness of the geometric join.

\begin{definition}
Let $X_1, \dots, X_m$ be subsets of the Euclidean space $\R^d$. The \df{geometric join} of $X_1$, $\dots$, $X_m$ is the set of all convex combinations $t_1x_1 + \cdots   + t_mx_m \in \R^d$ where $x_i \in X_i$, $t_i\ge 0$, and $\sum\limits_{^{i=1}}^{_m} t_i = 1$. The geometric join of the subsets $X_1, \dots, X_m$ of $\R^d$ is denoted by $\xm$.
\end{definition} 

\begin{remark}
  In this paper we will consider the case when the subsets $X_1, \dots ,X_m$ are finite, but our results can easily be extended to the case when the $X_i$ are arbitrary compact subsets of $\R^d$. 
\end{remark}

The subsets $X_1$, $\dots$, $X_m$ are often referred to as \df{color classes}, and a subset $Y\subset X_1$ $\cup$ $\cdots$ $\cup X_m$ is called \df{colorful} if $|Y\cap X_i|\leq 1$ for every $i$. The convex hull of a colorful subset is called a \df{colorful simplex}. In other words, the geometric join $\xm$ is the union of all colorful simplices spanned by $X_1\cup \cdots \cup X_m$.  

\medskip

Let us start by pointing out some simple examples. Consider a point set $X \subset \R^d$. Carath{\'e}odory's theorem \cite{Car1907} states that for any point $p$ in the convex hull of $X$, i.e. $x\in \conv X$, there exists a subset $Y\subset X$ such that $|Y|\leq d+1$ and $p\in \conv Y$. This means that if we color each point in $X$ by  $d+1$ distinct colors, then every subset $Y\subset X$ with $|Y|\leq d+1$ spans a colorful simplex, so in this case the geometric join is the same as $\conv X$. Using our notation, this means if $X = X_1$ $= \cdots$ $= X_{d+1}$, then $X_{[d+1]} = \conv X$. A well-known generalization of Carath{\'e}odory's theorem is the colorful Carath{\'e}odory theorem due to B{\'a}r{\'a}ny \cite{bar1982} which states that if $X_1$, $\dots$, $X_{d+1}$ are subsets of $\R^d$ and a point $p$ is contained in $\conv X_i$ for all $1\leq i \leq d+1$, then the point $p$ is contained in a colorful simplex spanned by $X_1$, $\dots$, $X_{d+1}$. Equivalently, this can be stated as:  $\bigcap_{i=1}^{d+1} \conv X_i \subset \xm$. In fact the stronger statement $\bigcap_{i\neq j} \conv (X_i\cup X_j) \subset \xm$ also holds \cite{aro2009,hol2008}.

\medskip

Here we consider the following problem.

\begin{probs} \label{join-conn-prob} 
Give sufficient conditions in terms of $m$ and $d$ for the contractibility or $k$-connectedness of $\xm$. 
\end{probs}

We may compare the geometric join with the \emph{abstract join} (see~\cite{mat2003}), which can be regarded as a geometric join after putting the $X_i$'s into $\mathbb R^D$, with sufficiently large $D$, so that the affine hulls of the $X_i$'s are in general position. It is known that an abstract join of $m$ finite sets, each of cardinality greater than one, is homotopic to a wedge of $(m-1)$-dimensional spheres. The geometric join can be regarded as a piecewise linear image of the abstract join in the ambient space $\mathbb R^d$, where $X_i$'s reside, and its homotopy type may be different from that of the abstract join.  

In the subsequent sections we show that the geometric join has certain connectivity for sufficiently large $m$, depending on $d$. These are partial results towards establishing the following. 

\begin{conjecture} \label{conj-main} 
The geometric join $\xm$ is contractible whenever $m\geq d+1$.  
\end{conjecture}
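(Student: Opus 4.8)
The plan is to prove the conjecture by induction on the dimension $d$, projecting one coordinate away and reducing the entire statement to a single connectivity lemma about one-dimensional slices. The base case $d=1$ is immediate, since on the line $\xm$ is the interval $[\min_i\min X_i,\ \max_i\max X_i]$ as soon as $m\ge 2$, and for $d=2,3$ the statement is already available.

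First I would record a reduction that lowers the goal from contractibility to mere $(d-1)$-connectivity. The join $\xm$ is a finite union of colorful simplices, hence a compact $d$-dimensional polyhedron sitting inside $\R^d$; by Alexander duality in $S^d$ its top reduced homology $\tilde H_d(\xm)$ vanishes, and $\tilde H_k(\xm)=0$ for $k>d$ for dimension reasons. Consequently, if one can show that $\xm$ is $(d-1)$-connected, then Hurewicz gives $\tilde H_k(\xm)=0$ for $k\le d-1$ as well, so $\xm$ is simply connected with trivial reduced homology in every degree, and Whitehead's theorem forces it to be contractible. This is encouraging: the abstract join of $m$ sets is a wedge of $(m-1)$-spheres and hence $(m-2)$-connected, which for $m\ge d+1$ is exactly $(d-1)$-connectivity, and one expects the piecewise-linear image in $\R^d$ to inherit at least this much connectivity.

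For the inductive step I would fix a generic direction $u$ and let $\pi\colon\R^d\to\R^{d-1}$ be a linear projection whose kernel is the line $\R u$. Since taking convex combinations commutes with linear maps, $\pi(\xm)$ is precisely the geometric join of the projected families $\pi(X_1),\dots,\pi(X_m)$, and because $m\ge d+1>(d-1)+1$ the inductive hypothesis makes this projected join contractible. The key point I would then try to establish is that every fiber $\pi^{-1}(\bar y)\cap\xm$ is connected, i.e. a single segment or point. Granting this, pushing each fiber down to its lowest point defines a deformation retraction of $\xm$ onto the graph of the lower envelope $\bar y\mapsto\min\{t:\bar y+tu\in\xm\}$; this envelope is continuous because $\xm$ is a finite union of simplices, so its graph is homeomorphic to the contractible base $\pi(\xm)$, and contractibility of $\xm$ follows at once.

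The hard part will be this fiber lemma: showing that if two points $z_1,z_2$ of a line $\ell$ lie in $\xm$ then the whole segment between them does. Writing $z_1$ and $z_2$ as colorful combinations and taking a convex combination expresses an intermediate point as a combination of points from the $X_i$ in which colors may now repeat, and one must extract from it a single colorful simplex through that point. This is visibly a colorful Carath\'eodory--type phenomenon, but it is \emph{not} an instance of the classical theorem, whose hypothesis (the point lying in $\conv X_i$ for every color) is far stronger than what is available here; closing this gap is exactly where the full force of $m\ge d+1$ should enter, and where I expect the difficulty to be concentrated. Should the fibers turn out to be disconnected for some configurations, the fallback is to retreat to the weaker target of $(d-1)$-connectivity from the reduction above and transfer it from the contractible base through $\pi$ via a Leray spectral sequence; but then the surplus fiber components contribute to $H_0$ of the fibers and must be annihilated, so one is again forced to control precisely these extra pieces. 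This is, in my estimation, the genuine obstacle to a proof valid for all $m\ge d+1$.
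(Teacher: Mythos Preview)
The statement you are addressing is a \emph{conjecture} that the paper explicitly leaves open for $d\ge 4$; the authors establish it only for $d\le 3$ (Sections~5--6) and, under the much stronger hypothesis $m>\frac{d(d+1)}{2}$, in Theorem~4.1. Any complete argument would therefore be a genuinely new result, and your proposal should be read against that backdrop.

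Your reduction of contractibility to $(d-1)$-connectedness via Alexander duality, Hurewicz, and Whitehead is correct, and the paper uses essentially the same manoeuvre at the end of Section~6. The substantive gap is the fiber lemma. As you state it---that for a \emph{generic} direction $u$ every line parallel to $u$ meets $\xm$ in a connected set---it is false. Take $d=2$, $m=3$, $X_1=\{(0,0)\}$, $X_2=\{(1,0)\}$, $X_3=\{(0,1),(1,1)\}$; then $X_{[3]}=\conv\{(0,0),(1,0),(0,1)\}\cup\conv\{(0,0),(1,0),(1,1)\}$ is a non-convex pentagon, and the horizontal line $y=4/5$ meets it in the two disjoint segments $[0,1/5]\times\{4/5\}$ and $[4/5,1]\times\{4/5\}$. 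Every direction within $45^{\circ}$ of horizontal is equally bad, so bad directions form an open set and genericity cannot exclude them. (Indeed, if the lemma held for \emph{all} directions then $\xm$ would be convex, which it almost never is.) What your argument actually needs is the existence of \emph{some} direction for which all fibers are connected; in the pentagon the vertical direction happens to work, but you give no mechanism for producing such a direction in general, and already for the non-starshaped three-dimensional configurations of Schulz and T\'oth there is no obvious candidate. Your Leray fallback does not evade the issue either: controlling the surplus components in $H_0$ of the fibers is the same obstruction in different language. In short, you have isolated a natural strategy and correctly identified where the difficulty lies, but the missing step appears to be essentially as hard as the conjecture itself---which is precisely why the paper stops at $d=3$.
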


This conjecture is open even when $m=d+1$ and each $X_i$ consists of two elements. If any of the $X_i$ were a singleton, then $X_{[m]}$ is trivially contractible. Actually, the authors do not agree whether the conjecture should be true or false, and perhaps it is better to look for a counterexample. 

\medskip

In section~\ref{star-sect} we show that $\xm$ is starshaped whenever $m>d(d+1)$. This is a simple consequence of Tverberg's theorem \cite{tver1966}, and of course implies contractibility of $\xm$.   

Section~\ref{morse-dist-sec} introduces a technique for studying the homotopy type of compact subsets of $\R^d$ via an analogue of the Morse theory to the distance function. As a consequence we can show that $\xm$ is $(k-2)$-connected whenever $m>\frac{dk}{2}$. This is done in section \ref{join-sec}. Note that this implies that $\xm$ is contractible whenever $m>\frac{d(d+1)}{2}$, which is a slight improvement on the approach of section \ref{star-sect}. 

In section~\ref{simple} we apply the nerve theorem to show that the geometric join in $\mathbb{R}^d$ is simply connected whenever $m>\frac{d+2}{2}$, and it is easily seen that this bound is best possible. This implies that our Conjecture holds when $d=2$. It should be noted that the case $d=2$ was previously verified in \cite{bois1991,tot2010}. 

Section~\ref{dim2-3-sec} gives a proof of our Conjecture for $d = 3$. Our proof uses some basic observations about geometric joins in $\R^2$ together with the ``strong'' colorful Carath\'{e}odory theorem \cite{aro2009,hol2008}. 

In section \ref{matroid} we generalize the notion of geometric joins by replacing the color classes by an arbitrary matroid. This gives rise to a generalization of our main Problem, and it turns out that many of our methods also work in this more general setting. 

\section{Starshapedness of the geometric join} \label{star-sect}

We start with an observation that the geometric join is {\em starshaped} for sufficiently large $m$, which obviously implies contractibility.  

\begin{theorem} \label{join-starshaped}
If $m > d(d+1)$, then $\xm$ is starshaped.
\end{theorem}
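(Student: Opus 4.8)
The plan is to exhibit an explicit star center $p$ coming from Tverberg's theorem, and then to verify that the straight segment from $p$ to any $q\in\xm$ stays inside $\xm$ by covering it with at most two colorful simplices. Recall that Tverberg's theorem guarantees that any $(r-1)(d+1)+1$ points in $\R^d$ can be partitioned into $r$ parts whose convex hulls share a common point, and that taking $r=d+1$ requires exactly $d(d+1)+1$ points, which is precisely the regime $m>d(d+1)$.

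First I would fix one representative $x_i\in X_i$ from each color class and apply Tverberg's theorem to the $m\ge d(d+1)+1$ points $x_1,\dots,x_m$ with $r=d+1$. This yields a partition of $\{1,\dots,m\}$ into $d+1$ nonempty blocks $A_1,\dots,A_{d+1}$ and a common point $p\in\bigcap_{k=1}^{d+1}\conv\{x_i:i\in A_k\}$. In particular $p$ lies in $d+1$ colorful simplices whose color supports $A_1,\dots,A_{d+1}$ are pairwise disjoint; this redundancy is the crucial feature, and it also shows $p\in\xm$. I claim $p$ is a star center.

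Given an arbitrary $q\in\xm$, I would represent $q$ inside a colorful simplex of minimal support, so that $q$ lies in the relative interior of $\Delta:=\conv\{y_i:i\in T\}$ for affinely independent vertices $y_i\in X_i$ with $|T|\le d+1$. The easy case is $|T|\le d$: then $T$ meets at most $d$ of the $d+1$ blocks, so some block $A_k$ is disjoint from $T$, and the colorful simplex $\conv(\{x_i:i\in A_k\}\cup\{y_i:i\in T\})$ contains both $p$ and $q$, hence the whole segment $[p,q]$.

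The hard part will be the case $|T|=d+1$, where $T$ may meet every block and no single disjoint block exists; I would resolve it with an \emph{exit point} argument. If $p\in\Delta$ the segment lies in $\Delta\subseteq\xm$ and we are done. Otherwise the segment from the interior point $q$ to the exterior point $p$ crosses $\partial\Delta$ at a unique point $z^\ast$ lying on a proper face $\conv\{y_i:i\in T'\}$ with $T'\subsetneq T$, so $|T'|\le d$. Now $T'$ misses some block $A_k$, and the colorful simplex $S:=\conv(\{x_i:i\in A_k\}\cup\{y_i:i\in T'\})$ contains $p$ (since $\conv\{x_i:i\in A_k\}\ni p$) as well as $z^\ast$ (since $\conv\{y_i:i\in T'\}\ni z^\ast$); by convexity $[p,z^\ast]\subseteq S\subseteq\xm$. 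Together with $[z^\ast,q]\subseteq\Delta\subseteq\xm$ this covers all of $[p,q]$, so $p$ is a star center and $\xm$ is starshaped. The only delicate point to check carefully is that the boundary crossing drops the support strictly below $d+1$, which is exactly what frees up a block.
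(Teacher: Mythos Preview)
Your proof is correct and follows essentially the same strategy as the paper: pick one representative per color, apply Tverberg with $r=d+1$ to obtain a common point $p$ of $d+1$ colorful simplices with pairwise disjoint color supports, and then use pigeonhole to find a block $A_k$ disjoint from the color support of the simplex carrying $q$, so that a single colorful simplex contains both $p$ and $q$.

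The only difference is in how the full-support case $|T|=d+1$ is dispatched. The paper observes up front that it suffices to treat points on $\partial\xm$, and for such points Carath\'eodory already gives a colorful simplex of dimension at most $d-1$, so $|T|\le d$ is automatic and no case split is needed. Your exit-point argument is a perfectly valid alternative: it accomplishes the same reduction locally, passing from $q$ to the boundary point $z^\ast$ of the carrying simplex $\Delta$, which lies on a proper face and hence has support of size at most $d$. Both routes are short; the paper's is slightly slicker since it avoids the case distinction entirely.
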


\begin{proof}
For each $1\leq i \leq m$, choose one element $x_i\in X_i$, and let $T = \{x_1\dots,x_m\}$. By Tverberg's theorem~\cite{tver1966} there is a partition $T = T_1 \cup\cdots\cup T_{d+1}$ and a point $t\in \R^d$ such that $t \in \conv T_j$ for each $j$. We will show that every point $x \in \xm$ can be ``seen'' from $t$. 

It suffices to consider the case when $x$ belongs to the boundary of $\xm$, so by Carath\'eodory's theorem we may assume $x$ belongs to a colorful simplex of dimension at most $d-1$. Thus $x$ is contained in the convex hull of a colorful subset $Y$ with $|Y| \leq d$. By the pigeon-hole principle there exists some $T_j$ such that $T_j\cup Y$ is a colorful subset. Therefore the closed segment $[t,x]$ is contained in $\conv(T_j\cup Y)$ which is contained in $\xm$.   
\end{proof}

\begin{remark}
In the previous argument the Tverberg point can be replaced by any point in the $d$-core of $X = \bigcup X_i$, which is the intersection of the convex hulls of all sets $X\setminus Y$ where $|Y| = d$. Here in fact, it suffices to take all sets $X\setminus Y$ where $Y$ is a colorful subset with $|Y|=d$. 
\end{remark}

\begin{remark}
Actually, Krasnoselskii's theorem (that a compact $C$ is starshaped iff every $d+1$ of its points are seen from some point of $C$, see~\cite{dgk1963}) implies that $\xm$ is starshaped when $m \geq (d+1)^2+1$. 
\end{remark}

\begin{remark}
It should be noted that in $\R^2$ the geometric join $\xm$ is starshaped for $m\geq 3$ \cite{bois1991,tot2010}, but in $\R^3$ the geometric join $X_{[4]}$ is not necessarily starshaped as was shown in \cite{tot2010}.
\end{remark}

\section{Topology of subsets of $\mathbb R^d$ through the 
distance function} \label{morse-dist-sec}

Suppose we have a compact set $S\subset \mathbb R^d$ and we want to study its homotopy type. One possible way to do this (see also~\cite{eh2010}, where such methods are widely discussed) is to apply an analogue of the Morse theory to the distance function 
\[\rho_S : \mathbb R^d \to \mathbb R,\] 
which is $\rho_S(x) = \dist(x, S)$. The sets $$ S(t) = \{x\in \mathbb R^d : \rho_S(x) \le t\} $$ in this case are just $t$-neighborhoods of $S$. For $t=0$ the set $S(0)$ is equal to $S$ and $S(t) \subset S(t')$ whenever $t\leq t'$. 

If the function $\rho_S$ (which we simply denote by $\rho$) were a smooth function (which can only happen in the trivial case of convex $S$) we could study the problem using the ordinary Morse theory, however, in general the differential $d\rho$ is not always defined. For a given $x_0\not\in S$ let $P(x_0)$ denote the set of points in $S$ which are closest to $x_0$. If $P(x_0)$ consists of a single point then (for reasonable sets $S$) the differential $d\rho$ is the unit normal in the direction of $x_0-P(x_0)$. Otherwise the function $\rho$ has no differential at $x_0$.

The next informal observation is the following: If $\conv P(x_0)$ does not contain $x_0$ then varying $t$ in a neighborhood of $t_0 = \rho(x_0)$ does not influence the topology of $S(t)$ near $x_0$. This is because $S(t_0)$ in the first order approximation is the complement of the convex cone 
\[\{ x\in \mathbb R^d : \forall y\in P(x_0) (x - x_0, x_0 - y) \ge 0\},\] 
which has nonempty interior; and $S(t)$ in a neighborhood of $x_0$ looks similarly when $t$ is close to $t_0$. 

Of course, the above argument is informal and we want to give a 
rigorous proof in the following useful case.

\begin{theorem} \label{morse-dist-contr} 
Let $S$ be the union of a finite number of compact convex sets in $\R^d$. If for any $x_0\not\in S$ we have $x_0\not\in \conv P(x_0)$, then $S$ is contractible.
\end{theorem}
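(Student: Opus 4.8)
The plan is to treat $\rho=\rho_S$ as a Morse function whose sublevel sets are the neighborhoods $S(t)$, and to establish two facts: first, that the hypothesis ``$x_0\notin\conv P(x_0)$ for all $x_0\notin S$'' means $\rho$ has no critical points off $S$, so that $S(t)$ deformation retracts onto $S=S(0)$ for every $t>0$; and second, that $S(t)$ is contractible for large $t$. Combining these shows that $S$ is homotopy equivalent to a contractible set, hence contractible.

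First I would make the notion of descent direction precise. Fix $x_0\notin S$ and let $q$ be the nearest point of the compact convex set $\conv P(x_0)$ to $x_0$; the hypothesis guarantees $q\neq x_0$. Put $v=q-x_0$. Using the variational inequality $(x_0-q,\,p-q)\le 0$ for the projection $q$, valid for every $p\in P(x_0)\subset\conv P(x_0)$, a short computation gives $(v,\,p-x_0)\ge|v|^2>0$, that is $(v,\,x_0-p)\le -|v|^2<0$ for all $p\in P(x_0)$. Since each $p\in P(x_0)\subset S$ realizes $\rho(x_0)=|x_0-p|$, this yields $\rho(x_0+\epsilon v)\le|x_0+\epsilon v-p|<|x_0-p|=\rho(x_0)$ for small $\epsilon>0$, so $v$ strictly decreases $\rho$. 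More generally, the set of \emph{descent directions} at $x_0$, namely the $w$ with $(w,\,x_0-p)<0$ for every $p\in P(x_0)$, is an open convex cone, and it is nonempty precisely when $x_0\notin\conv P(x_0)$.

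Next I would globalize. Because $P$ is upper semicontinuous and the defining inequalities are strict, a vector $w$ that is a descent direction at $x_0$ remains one at every point of a neighborhood of $x_0$. Hence $\R^d\setminus S$ can be covered by open sets $U_\alpha$, on each of which a fixed vector $v_\alpha$ is a descent direction; choosing a locally finite partition of unity $\{\phi_\alpha\}$ subordinate to $\{U_\alpha\}$ and setting $W=\sum_\alpha\phi_\alpha v_\alpha$ produces a locally Lipschitz vector field on $\R^d\setminus S$ which, by convexity of the cone of descent directions, is itself a descent direction at every point. Integrating $W$ gives a flow along which $\rho$ strictly decreases; reparametrising each trajectory by the value of $\rho$ (sending the trajectory through $x$ to the point where $\rho$ equals $(1-s)\rho(x)$) defines a deformation retraction of $S(t)$ onto $S$, so $S\simeq S(t)$ for every $t>0$.

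Finally, choose $c$ and $R$ with $S\subset B(c,R)$ and take any $t\ge R$. If $x\in S(t)$ with $|x-s|\le t$ for some $s\in S$, then for $p=(1-\lambda)c+\lambda x$ the triangle inequality gives $|p-s|\le(1-\lambda)|c-s|+\lambda|x-s|\le(1-\lambda)R+\lambda t\le t$, so the whole segment $[c,x]$ lies in $S(t)$; thus $S(t)$ is starshaped, hence contractible, and therefore $S\simeq S(t)$ is contractible. The separation and triangle-inequality estimates are routine; the main obstacle is the globalization step — gluing the pointwise descent directions into a single continuous field and, above all, checking that the resulting flow extends to a genuine continuous deformation retraction onto $S$ despite the non-smoothness of $\rho$, i.e. that trajectories converge to $S$ compatibly as $\rho\to0$. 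This is exactly the point at which the informal Morse-theoretic picture must be made rigorous.
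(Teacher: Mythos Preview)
Your argument is correct in outline and takes a genuinely different route from the paper. The paper does \emph{not} build a gradient flow; instead it invokes the nerve theorem at the outset, so that the homotopy type of $S(t)$ is encoded by the nerve $N(t)$ of the family $\{C_i(t)\}$, and then tracks how $N(t)$ changes as $t$ increases. Under a smoothness/general-position assumption (to which the general case is reduced by approximation), each change in $N(t)$ occurs at a single point $x_0$ where a new subfamily first intersects; the hypothesis $x_0\notin\conv P(x_0)$ forces $\rho(x_0)<t_0$, i.e.\ some $C_i(t_0)$ already contains $x_0$ in its interior, and a cap-cutting argument then shows the change in the nerve is a collapse. Thus the paper's proof is combinatorial rather than analytic.

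What each approach buys: your flow argument is closer to the Morse-theoretic picture the section advertises and needs neither the nerve theorem nor general-position perturbations; the paper's nerve argument, on the other hand, sidesteps exactly the difficulty you flag (continuously extending the retraction to $S$) because it works entirely with finite simplicial complexes, and it also adapts immediately to the $(k-2)$-connectedness statement that follows, since one sees directly that any non-collapsing change adds a simplex with at least $k+1$ vertices.

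On your acknowledged gap: it is real but easily closed without analyzing the flow near $S$. Your construction already gives, for every $0<t'<t$, a deformation retraction of $S(t)$ onto $S(t')$ (on the compact annulus $\{t'\le\rho\le t\}$ the descent rate of $\rho$ along $W$ is bounded below, so trajectories reach $\rho=t'$ in finite time and one can reparametrise by $\rho$ there). Since a finite union of compact convex sets is an ANR, there exists $\epsilon>0$ with $S$ a deformation retract of $S(\epsilon)$; combining with $S(\epsilon)\simeq S(t)$ and the starshapedness of $S(t)$ for large $t$ finishes the proof. This avoids having to show that individual trajectories converge to single points of $S$.
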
 

\begin{remark} 
In~\cite{pan2001}, under the same condition $x_0\not\in \conv P(x_0)$, it was proved that $\mathbb R^d\setminus X$ is contractible for sets $X$ of another kind. The methods of~\cite{pan2001} also seem to imply Theorem~\ref{morse-dist-contr} but we provide a different proof here, which is short and self-contained. 
\end{remark}

\begin{proof} 
Let $S = \bigcup_{i=1}^m C_i$ where $F= \{C_1, \dots, C_m\}$ is a family of compact convex sets in $\R^d$. By the nerve theorem (see~\cite{bor1948} or~\cite[Corollary~4G.3]{hat2002}) the homotopy type of $S$ is determined by the nerve of $F$, which we denote by $N$. This is the abstract simplicial complex with vertex set $[m]$ where $\sigma\subset [m]$ is a simplex of $N$ if and only if $\bigcap_{i\in\sigma}C_{i} \neq \emptyset$. To be more precise, the nerve theorem implies that $S$ is homotopy equivalent to the geometric realization of $N$. For $t\geq 0$, let $N(t)$ denote the nerve of the family $F(t) = \{C_1(t), \dots, C_m(t)\}$. The idea is to show that the homotopy type of the nerve $N(t)$ does not change as the parameter $t$ increases. This will prove the claim of the theorem, since $N(t)$ is an $(m-1)$-dimensional simplex for all sufficiently large $t$. Alternatively, by thinking of this process in reverse, we show that $N$ can be obtained from the $(m-1)$-dimensional simplex by a sequence of simplicial collapses.

We will prove the theorem for the case when the members of $F$ are smooth, strictly convex, and in some suitable ``general position'', which will be explained below. Any other configuration can be reduced to this case by approximating every body in the family by a smooth and strictly convex body, maintaining the other general position assumptions, such that the nerve of the approximating family remains the same. Since the nerve lemma applies for every such approximating family and the nerve remains the same, we conclude that the contractibility of the union of the approximating family implies the contractibility of the union of the original family.

Now consider the situation when the nerve $N(t)$ changes. This means there is some subfamily $G(t) \subset F(t)$ which is intersecting for all $t \geq t_0>0$,  but not intersecting for any $t<t_0$. Here we impose the additional ``general position'' assumption, namely, that this is the only change in $N(t)$ which happens for all $t$ sufficiently close to $t_0$. Since the members of $F$ are smooth and strictly convex it follows that the members of $G(t_0)$ intersect in a unique point $x_0 \in \R^d$. Clearly we have $\rho(x_0) \leq t_0$, and we claim that the inequality must be strict. Define the sets \[I = \{i \in [m] \: : \: \dist(x_0, C_i) < t_0\} \: \mbox{ and } \: J = \{j \in [m] \: : \: \dist(x_0, C_j)  = t_0\}.\] If $\rho(x_0) = t_0$, then $I=\emptyset$ and the set $P(x_0)$ contains a unique point $y_j\in C_j$ for every $j\in J$. Since the sets $C_j(t_0)$ have a single point, $x_0$, in common, the vectors $y_j-x_0$ contain the origin in their convex hull and therefore $x_0$ is contained in $\conv P(x_0)$. This contradicts the hypothesis. Therefore we may assume that $\rho(x_0) < t_0$, which implies $I\neq\emptyset$, and therefore $I \cup J$ is a partition of $[m]$.

The point $x_0$ is contained in the interior of the set $C_i(t_0)$ for every $i\in I$, while it is on the boundary of the set $C_j(t_0)$ for every $j\in J$. Let $\hat{C}_j(t_0)$ be the set obtained from $C_j(t_0)$ by cutting off, by a hyperplane, a small cap centered at $x_0$, for every $j\in J$. Since the bodies $C_j(t_0)$ are strictly convex, these small caps can be chosen arbitrarily close to $x_0$, and since we are cutting off by hyperplanes, the bodies $\hat{C}_j(t_0)$ remain convex so the nerve theorem still applies. The resulting family $\{\hat{C}_j(t_0)\}_{j\in J}$ will have empty intersection, and if the removed caps are chosen sufficiently small, we will have \[\bigcup_{i\in [m]} C_i(t_0) = \left( \bigcup_{i\in I} C_i(t_0) \right) \: \cup \: \bigcup_{j\in J}\hat{C}_j(t_0).\] Let $\hat{N}(t_0)$ denote the nerve of the family $\{C_i(t_0)\}_{i \in I} \cup \{\hat{C}_j(t_0)\}_{j\in J}$. By the nerve theorem, $N(t_0)$ and $\hat{N}(t_0)$ are homotopy equivalent, and clearly $\hat{N}(t_0) = N(t)$ for all $t < t_0$ which are sufficiently close to $t_0$. 
\end{proof}

For a weaker conclusion than contractibility, we have the following. 

\begin{theorem} \label{morse-dist-conn} 
Let $S$ be the union of a finite number of compact convex sets in $\R^d$. If for any $x_0\not\in S$ we have $x_0\notin \conv Y$ where $Y\subset P(x_0)$ with $|Y|\le k$, then $S$ is $(k-2)$-connected.  
\end{theorem}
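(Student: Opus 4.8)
The plan is to run the proof of Theorem~\ref{morse-dist-contr} essentially verbatim, but to replace its conclusion ``the homotopy type of the nerve $N(t)$ never changes'' by the weaker ``the homotopy type of $N(t)$ changes only by attaching cells of dimension at least $k$''. As there, I would first reduce to the generic situation of smooth, strictly convex bodies $C_1,\dots,C_m$ in general position, invoke the nerve theorem so that $S$ is homotopy equivalent to $N=N(0)$, and then track the nerves $N(t)$ of the neighborhood families $F(t)=\{C_1(t),\dots,C_m(t)\}$ as $t$ grows from $0$ to $+\infty$. Since $C_i(t)\subseteq C_i(t')$ for $t\le t'$, the complex $N(t)$ only gains faces as $t$ increases, and for $t$ large it is the $(m-1)$-dimensional simplex on $[m]$, which is contractible. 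Thus the inclusion $N(0)\hookrightarrow N(t_{\mathrm{large}})$ is the composite of the elementary changes occurring at the finitely many critical values $t_0$, and it suffices to show that each such change induces an isomorphism on $\pi_i$ for every $i\le k-2$.

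Fix a critical value $t_0$, where some subfamily first becomes intersecting with common point $x_0$. Under the general position assumption I would check, exactly as in Theorem~\ref{morse-dist-contr}, that a single new simplex, on the vertex set $J=\{j : \dist(x_0,C_j)=t_0\}$, is added to the nerve, along its boundary (all of whose faces are already present). Now split into the two cases of that proof. If $\rho(x_0)<t_0$, then $I\neq\emptyset$, and the cap-cutting argument of Theorem~\ref{morse-dist-contr} applies word for word to give $N(t_0)\simeq N(t)$ for $t<t_0$ close to $t_0$; such a step is a homotopy equivalence and is harmless. The essential case is $\rho(x_0)=t_0$, where $I=\emptyset$, so that $P(x_0)=\{y_j : j\in J\}$ and, as in Theorem~\ref{morse-dist-contr}, the vectors $y_j-x_0$ contain the origin in their convex hull, whence $x_0\in\conv P(x_0)$.

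This is where the weakened hypothesis does its work. Since $x_0\in\conv P(x_0)$ but $x_0\notin\conv Y$ for every $Y\subseteq P(x_0)$ with $|Y|\le k$, taking $Y=P(x_0)$ forces $|P(x_0)|=|J|\ge k+1$. Hence the new simplex added at $t_0$ has dimension $|J|-1\ge k$, so crossing $t_0$ amounts to attaching a cell of dimension $n\ge k$. The corresponding pair is $(n-1)$-connected, so the inclusion induces an isomorphism on $\pi_i$ for $i\le n-2$, and in particular for all $i\le k-2$. Every critical step is therefore an isomorphism on $\pi_i$ for $i\le k-2$, and composing over all critical values yields $\pi_i(S)\cong\pi_i(N(0))\cong\pi_i(\text{$(m-1)$-simplex})=0$ for all $i\le k-2$; that is, $S$ is $(k-2)$-connected. (For $k\ge d+1$ no critical value of the second type can occur, by Carath\'eodory's theorem, and one recovers the contractibility of Theorem~\ref{morse-dist-contr}.)

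I expect the main obstacle to lie not in the homotopy-theoretic bookkeeping above, which is routine once the cell dimensions are identified, but in the local analysis at the critical values under the general position reduction: namely, verifying that at each $t_0$ exactly one new simplex, on the vertex set $J$, appears with all of its proper faces already present — so that the change really is a single cell attachment of the claimed dimension — and that the dichotomy $\rho(x_0)<t_0$ versus $\rho(x_0)=t_0$ is genuinely exhaustive. Most of this is already established in the proof of Theorem~\ref{morse-dist-contr}; the only new ingredient is the dimension bound $|J|\ge k+1$ extracted from the hypothesis, together with the observation that attachments of cells of dimension $\ge k$ preserve $(k-2)$-connectedness.
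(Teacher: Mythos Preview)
Your proposal is correct and follows essentially the same approach as the paper, which simply remarks that the proof of Theorem~\ref{morse-dist-contr} goes through with the additional observation that at each change of the nerve either the homotopy type is preserved or the newly added simplex has at least $k+1$ vertices, so $\pi_i$ is preserved for $i\le k-2$. You have spelled out the dichotomy and the dimension bound $|J|\ge k+1$ in more detail than the paper does, but the argument is the same.
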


\begin{proof} 
The proof of Theorem \ref{morse-dist-contr} goes through as before, using one additional observation: Each time the nerve $N(t)$ changes, as $t$ increases, either there is no change in the homotopy type, or the new simplex being added has at least $k+1$ vertices, so in both cases $\pi_i(N(t))$ is preserved for $i\le k-2$. 
\end{proof}

\section{Contractibility of the geometric join} \label{join-sec}

We now apply the methods from the previous section to our main Problem concerning the connectivity of $\xm$. Our first result is the following. 

\begin{theorem} \label{contr-dsq} 
If $m > \frac{d(d+1)}{2}$, then $\xm$ is contractible. 
\end{theorem}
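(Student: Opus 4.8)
The plan is to deduce Theorem~\ref{contr-dsq} from Theorem~\ref{morse-dist-contr}. Since each $X_i$ is finite, the join $\xm$ is the union of the finitely many colorful simplices it spans, and each such simplex is a compact convex set; thus $\xm$ is exactly of the form to which Theorem~\ref{morse-dist-contr} applies. It therefore suffices to verify the hypothesis: for every $x_0\notin\xm$ we have $x_0\notin\conv P(x_0)$, where $P(x_0)$ is the set of points of $\xm$ closest to $x_0$.

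So I would argue by contradiction, supposing $x_0\notin\xm$ but $x_0\in\conv P(x_0)$. By Carath\'eodory's theorem there are points $p_1,\dots,p_s\in P(x_0)$ with $s\le d+1$ and $x_0\in\conv\{p_1,\dots,p_s\}$, and I may take this collection minimal, so that the $p_i$ are affinely independent and $x_0$ lies in the relative interior. Writing $t_0=\dist(x_0,\xm)$ and $u_i=(p_i-x_0)/t_0$, the membership $x_0\in\conv\{p_i\}$ becomes the ``surrounding'' relation $0\in\conv\{u_1,\dots,u_s\}$. Each $p_i$ is the closest point to $x_0$ of some colorful simplex, hence lies on a proper face of it spanned by a colorful set whose color set $S_i$ satisfies $|S_i|\le d$ (the face, not containing $x_0$, has dimension at most $d-1$). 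Two geometric facts drive the argument: first, every point of $\bigcup_i X_i$ lies outside the open ball $B(x_0,t_0)$ (singletons are colorful simplices, so $\dist(x_0,x)\ge t_0$ for all such $x$); second, since $p_i$ is the foot of $x_0$ on a convex simplex, all vertices of its minimal face lie on the tangent hyperplane $H_i=\{y:(y-p_i,u_i)=0\}$.

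The combinatorial heart of the proof is then a counting step. Expanding $x_0=\sum_i\mu_i p_i=\sum_{i,j}\mu_i\lambda_{ij}x_{ij}$ with $x_{ij}\in X_j$, one sees that if the color sets $S_i$ were pairwise disjoint this would already exhibit $x_0$ inside a colorful simplex, a contradiction. I expect more to be true: in such a minimal configuration every color occurring in some $S_i$ must occur in at least two of them (the surrounding condition should force each used color to appear ``on at least two sides'', exactly as in the one-dimensional model, where a doubly-used color is unavoidable). Granting this doubling, the number of distinct colors used is at most $\tfrac12\sum_i|S_i|\le\tfrac12\,s\,d\le\tfrac12 d(d+1)$. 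Because $m>\tfrac{d(d+1)}{2}$, there is then a color $b$ used by none of the $p_i$; choosing any $y\in X_b$, which again lies outside $B(x_0,t_0)$, I would use $y$ to replace a repeated color and, via the colorful Carath\'eodory theorem applied to the point cloud $\{x_{ij}\}\cup\{y\}$, produce a genuine colorful simplex containing $x_0$. This contradicts $x_0\notin\xm$ and completes the verification.

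The step I expect to be the main obstacle is the doubling claim together with the substitution that follows it, i.e.\ the passage from the geometric data at the closest points to an actual colorful simplex covering $x_0$. The subtlety is that a single color may be represented by different points $x_{ij}\ne x_{i'j}$ in different faces, so the repetitions cannot simply be merged; it is precisely here that the tangency of the faces to the sphere and the surrounding relation $0\in\conv\{u_i\}$ must be used, and it is also here that the geometric join genuinely differs from the abstract join. (Alternatively, one can run the same analysis through Theorem~\ref{morse-dist-conn} to obtain the sharper statement that $\xm$ is $(k-2)$-connected whenever $m>\tfrac{dk}{2}$, of which the present theorem is the case $k=d+1$.)
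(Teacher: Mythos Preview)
Your setup is right and matches the paper exactly through the point where you obtain closest points $p_1,\dots,p_s\in P(x_0)$ with $s\le d+1$, $x_0\in\conv\{p_i\}$, and colorful sets $S_i$ with $|S_i|\le d$ carrying $p_i$. The counting bound $\sum_i|S_i|\le d(d+1)$ is also correct. But the mechanism you propose for extracting a contradiction is the wrong one, and the ``doubling'' step you leave open is precisely the content of the proof.

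You aim to contradict $x_0\notin\xm$ by eventually assembling a colorful simplex that \emph{contains} $x_0$; to that end you invoke the colorful Carath\'eodory theorem on $\{x_{ij}\}\cup\{y\}$. This does not work: colorful Carath\'eodory requires $x_0\in\conv C_j$ for every individual color class $C_j$, which you do not have (you only have $x_0\in\conv\bigcup_i S_i$). The paper's contradiction is different and simpler: it contradicts the \emph{minimality of the distance} $t_0=\dist(x_0,\xm)$, not the non-membership of $x_0$. Concretely, from $x_0\in\conv\{p_1,\dots,p_s\}$ one sees that the open halfspaces
\[
H_i^+=\{x:\langle x-p_i,\;x_0-p_i\rangle>0\}
\]
cover all of $\R^d$, while $S_i\cap H_i^+=\emptyset$ because $p_i$ is the nearest point of $\conv S_i$ to $x_0$. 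Now take \emph{any} color $j$ that occurs in at most one of the $S_i$'s, and any point $p$ of that color (if $j$ is used once, take the unique such point). Then $p\in H_i^+$ for some $i$, and since $p\notin S_i$ the set $S_i\cup\{p\}$ is still colorful; the segment $[p_i,p]\subset\conv(S_i\cup\{p\})\subset\xm$ comes strictly closer to $x_0$ than $t_0$, a contradiction. This single halfspace argument simultaneously proves your ``doubling'' claim for used colors and rules out unused colors, yielding that all $m$ colors occur at least twice among the $S_i$ and hence $2m\le\sum_i|S_i|\le d(d+1)$.

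So the gap is exactly where you flagged it, but the fix is not a substitution-plus-colorful-Carath\'eodory step; it is the halfspace covering that converts any under-represented color into a colorful simplex beating the minimum distance.
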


\begin{proof} 
The results of Section~\ref{morse-dist-sec} can be applied to $\xm$ since it is the union of the colorful simplices spanned by $X_1\cup \cdots \cup X_m$. We assume that $\xm$ is not contractible and obtain an upper bound on $m$, that is, the number of distinct colors. From Theorem~\ref{morse-dist-contr}, there is a point $x_0\not\in \xm$ that is contained in the convex hull of the set of its closest points $P(x_0)$. By Carath\'eodory's theorem there is a set $\{y_1,\ldots,y_k\}  \subset P(x_0)$ with $k \le d+1$ such that $x_0 \in \conv \{y_1,\ldots,y_k\}$ and each $y_i \in \conv S_i$ where $S_i$ is some colorful subset with $|S_i|\leq d$. Therefore $|S_1\cup \cdots \cup S_{k}| \leq d(d+1)$. Now we will show that each color appears in $S_1\cup \cdots \cup S_k$ at least twice. Assuming the contrary, we have to consider two cases:   

\begin{itemize}

\item {\em Some color $j$ is not used.} Define open halfspaces 
\[H_i^+ = \{x\in \mathbb R^d : \langle x - y_i, x_0- y_i\rangle > 0\}.\] 
These open halfspaces cover $\mathbb R^d$ because $x_0\in \conv\{y_1, \ldots, y_k\}$ and every $H_i^+$ is disjoint from its respective $S_i$. A point $p$ of color $j$, which exists by our assumption, is contained in some $H_i^+$, and therefore the segment $[y_i, p]$ is closer to $x_0$ than $y_i$. This is a contradiction since $S_i \cup \{p\}$ is a colorful subset whose convex hull contains the segment $[y_i, p]$.

\item {\em Some color $j$ is used only once}. Let $p$ denote the unique point in $S_1\cup \cdots \cup S_k$ of color $j$. Again, $p \in H_i^+$ for some $i$, and the set $S_i$ does not contain the color $j$ since $p$ was the only point of this color. Therefore $S_i\cup \{p\}$ is a colorful subset whose convex hull is closer to $x_0$ than the $y_i$'s, and again we obtain a contradiction. 
\end{itemize}

Therefore there are at most $\frac{d(d+1)}{2}$ distinct colors. \end{proof}

Similarly we prove:

\begin{theorem} \label{conn-dsq} 
If $m > \frac{dk}{2}$, then $\xm$ is $(k-2)$-connected. \end{theorem}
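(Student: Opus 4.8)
The plan is to run the argument for Theorem~\ref{contr-dsq} essentially verbatim, replacing the appeal to Theorem~\ref{morse-dist-contr} by an appeal to Theorem~\ref{morse-dist-conn}. Since $\xm$ is the union of the compact convex colorful simplices spanned by $X_1 \cup \cdots \cup X_m$, the distance-function machinery of Section~\ref{morse-dist-sec} applies directly. I would argue by contraposition: assuming $\xm$ is \emph{not} $(k-2)$-connected, Theorem~\ref{morse-dist-conn} supplies a point $x_0\notin\xm$ together with a subset $Y\subset P(x_0)$ with $|Y|\le k$ such that $x_0\in\conv Y$. Write $Y=\{y_1,\dots,y_{k'}\}$ with $k'\le k$; the goal is then to bound the number of colors by $m\le dk/2$, contradicting the hypothesis $m>\frac{dk}{2}$.

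Next I would localize each closest point. Every $y_i$ lies on the boundary of $\xm$, so by Carath\'eodory's theorem (exactly as in the proof of Theorem~\ref{join-starshaped}) it lies in the convex hull of a colorful subset $S_i$ with $|S_i|\le d$. As there are at most $k'$ of these sets, the total count with multiplicity satisfies $\sum_i |S_i| \le k'd \le kd$. The crucial step is to show that each of the $m$ colors occurs at least twice among $S_1,\dots,S_{k'}$; granting this, $2m \le \sum_i |S_i| \le kd$, that is $m\le dk/2$, the desired contradiction.

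I expect the ``every color appears twice'' step to be the only delicate point, and it is the same halfspace argument used for Theorem~\ref{contr-dsq}. Define the open halfspaces $H_i^+ = \{x\in\R^d : \langle x-y_i,\, x_0-y_i\rangle > 0\}$. These cover $\R^d$, because $x_0\in\conv\{y_1,\dots,y_{k'}\}$ forces any point avoiding all of them to contradict $\sum\lambda_i = 1$ in a barycentric expansion $x_0=\sum\lambda_i y_i$. Moreover each $H_i^+$ is disjoint from $\conv S_i$, since a point of $\conv S_i$ in $H_i^+$ would put a point of the segment to $y_i$ — which stays inside $\conv S_i\subset\xm$ — strictly closer to $x_0$ than $y_i$, contradicting $y_i\in P(x_0)$. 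Now suppose some color $j$ is used at most once: take a point $p$ of color $j$ (the unique such point of $\bigcup_i S_i$, or any point of $X_j$ if $j$ is unused), and pick $i$ with $p\in H_i^+$. Disjointness gives $p\notin S_i$, so $S_i$ avoids color $j$ and $S_i\cup\{p\}$ is colorful with at most $d+1$ points; its convex hull contains $[y_i,p]$, which comes strictly closer to $x_0$ than $y_i$ — the same contradiction. Hence every color appears at least twice.

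Finally I would note that this is genuinely distinct from Theorem~\ref{contr-dsq}: the bound on the number of closest points $y_i$ now comes from the connectivity parameter $k$ via Theorem~\ref{morse-dist-conn}, rather than from the $d+1$ of Carath\'eodory's theorem, and this is exactly what turns $\frac{d(d+1)}{2}$ into $\frac{dk}{2}$. I anticipate no serious obstacle, since every ingredient is already in place; the main thing to verify carefully is that the size-$k$ witness $Y$ furnished by Theorem~\ref{morse-dist-conn} feeds correctly into the color-counting inequality $2m\le\sum_i|S_i|\le kd$.
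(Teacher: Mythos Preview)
Your proposal is correct and follows the paper's approach exactly: the paper's own proof simply states that the argument of Theorem~\ref{contr-dsq} goes through with Theorem~\ref{morse-dist-conn} in place of Theorem~\ref{morse-dist-contr}, bounding the number of closest points $y_i$ by $k$ rather than by $d+1$. Your write-up spells out the halfspace covering and the ``every color appears at least twice'' argument in more detail than the paper does, but the content is identical.
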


\begin{proof} The previous proof goes through as before, but we need only consider subsets $\{y_1,\ldots, y_\ell\}\subseteq P(x_0)$ consisting of at most $k$ points in view of Theorem~\ref{morse-dist-conn}. The rest of the proof is the same.
\end{proof}

\begin{remark} 
If the points in $X_1\cup\dots\cup X_m$ are in appropriate general position, then the sum of codimensions of $S_i$ must be at least $d+1$, otherwise perturbations will destroy the inclusion $x_0\in\conv \{y_1,\ldots, y_\ell\}$. It follows that the number of vertices of the $S_i$ will be at most $(k-1)(d+1)$ and the inequality in Theorem~\ref{conn-dsq} can be relaxed to $m > \frac{(k-1)(d+1)}{2}$ in this case. 
\end{remark}

\section{An improved bound for simple connectedness} 
\label{simple} 

We have a feeling that our application of Morse theory for the distance function is not the optimal approach for attacking our main Problem. To illustrate this we give an improved sufficient condition for simple connectedness. 

\begin{theorem} \label{simp-conn}
If $m>\frac{d+2}{2}$, then $\xm$ is simply connected.
\end{theorem}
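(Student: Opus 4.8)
The plan is to use the nerve theorem, exactly as announced in the introduction. I would cover $\xm$ by its maximal colorful simplices: for each transversal $\tau=(x_1,\dots,x_m)$ with $x_i\in X_i$ put $\sigma_\tau=\conv\tau$, so that $\xm=\bigcup_\tau\sigma_\tau$, since every colorful simplex extends to a full transversal. Each $\sigma_\tau$ is convex and every nonempty intersection of them is convex, hence contractible, so the nerve theorem (as used in the proof of Theorem~\ref{morse-dist-contr}) gives a homotopy equivalence $\xm\simeq|N|$, where $N$ is the nerve of this cover. It therefore suffices to prove $\pi_1(N)=0$.

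Next I would isolate the ``combinatorial part'' of $N$. If a family of transversals all use the same point $x\in X_i$ in some color $i$, then their simplices all contain $x$, so the family spans a simplex of $N$; let $N_0\subseteq N$ be the subcomplex of all families having such a common coordinate. Covering $N_0$ by the simplices $\Delta_{i,x}=\{\tau:x_i=x\}$ and computing the nerve of this new cover, one finds that $\Delta_{i_1,x_1},\dots,\Delta_{i_r,x_r}$ have a common element exactly when the colors $i_1,\dots,i_r$ are distinct; hence the nerve of $\{\Delta_{i,x}\}$ is the abstract join $X_1*\cdots*X_m$. A second application of the nerve theorem gives $N_0\simeq X_1*\cdots*X_m$, a wedge of $(m-1)$-spheres, which is simply connected as soon as $m\ge 3$; and $m>\frac{d+2}{2}$ forces $m\ge 3$ whenever $d\ge 2$.

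It then remains to show that $N_0\hookrightarrow N$ is surjective on $\pi_1$, which yields $\pi_1(N)=0$. The only edges of $N$ not already in $N_0$ join two transversals $\tau=(p_i)$, $\tau'=(q_i)$ that differ in every color yet whose simplices still meet. For such a ``long'' edge I would take a vertex $v$ of the polytope $\sigma_\tau\cap\sigma_{\tau'}$, write $v=\sum_i s_ip_i=\sum_j t_jq_j$, and set $V_p=\{i:s_i=0\}$ and $V_q=\{j:t_j=0\}$. Letting $F_\tau,F_{\tau'}$ be the smallest faces of $\sigma_\tau,\sigma_{\tau'}$ containing $v$, the point $v$ lies in their relative interiors, and since $v$ is a vertex of the intersection one must have $\mathrm{aff}(F_\tau)\cap\mathrm{aff}(F_{\tau'})=\{v\}$; thus $\dim F_\tau+\dim F_{\tau'}\le d$, i.e. $|V_p|+|V_q|\ge 2(m-1)-d\ge 1$, which is precisely where $m>\frac{d+2}{2}$ is spent. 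Assuming $V_p\neq\emptyset$ (otherwise swap $\tau$ and $\tau'$), the transversal $\tau''$ that keeps $p_i$ for $i\notin V_p$ and switches to $q_i$ for $i\in V_p$ satisfies $v\in\conv\{p_i:i\notin V_p\}\subseteq\sigma_{\tau''}$, shares a color with $\tau$ and with $\tau'$, and spans a triangle of $N$ with them (all three simplices contain $v$). Hence the long edge $\{\tau,\tau'\}$ is homotopic rel endpoints, through this triangle, to the path $\tau-\tau''-\tau'$ lying in $N_0$. Replacing every long edge of an arbitrary loop in $N$ this way pushes the loop into $N_0$, giving the surjectivity and therefore $\pi_1(\xm)=\pi_1(N)=0$.

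The step I expect to be the main obstacle is the last paragraph: establishing rigorously that a vertex of $\sigma_\tau\cap\sigma_{\tau'}$ drops at least $2(m-1)-d$ coordinates, and then the homotopy bookkeeping that pushing each edge of a loop into $N_0$ is enough to conclude. This dimension count is exactly the quantity that degenerates in the best-possible example $d=2$, $m=2$, where two ``antipodal'' transversal segments fail to meet and leave an uncontractible $1$-cycle. The identification $N_0\simeq X_1*\cdots*X_m$ is standard, but like the reduction in the proof of Theorem~\ref{morse-dist-contr} it needs enough general position to guarantee that the two nerve theorems genuinely apply.
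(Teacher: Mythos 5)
Your proposal is correct and is essentially the paper's own argument recast in nerve language: the paper likewise covers $\xm$ by the images of the maximal simplices of the abstract join $Y=Y_1*\cdots*Y_m$, uses $m>\frac{d+2}{2}$ in exactly your dimension count to find a point of $f(\sigma_i)\cap f(\sigma_{i+1})$ lying on a proper face of one of them, inserts the same intermediate transversal (your $\tau''$, the paper's $\tau_i$) sharing vertices with both, and concludes from simple connectivity of the abstract join for $m\ge 3$. Two small points to tighten, both consistent with what you flagged: in the dimension count the representations of $v$ must be chosen with affinely independent support (Carath\'eodory inside the minimal faces $F_\tau$, $F_{\tau'}$), since for an arbitrary representation the implication from $\dim F_\tau+\dim F_{\tau'}\le d$ to $|V_p|+|V_q|\ge 2(m-1)-d$ can fail when a transversal is affinely degenerate; and $d=1$ must be handled separately as in the paper (there $m=2$ is allowed and $N_0$ need not be simply connected, but a compact connected subset of $\R$ is an interval).
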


\begin{proof} 
We think of the geometric join as a map $f: Y \to \mathbb{R}^d$ where $Y = Y_1 *\cdots * Y_m$ is the abstract join and $f$ is linear on every simplex of $Y$. Thus, $X_i = f(Y_i)$, $\xm = f(Y)$, and for $m\geq 2$, $\xm$ and $Y$ are connected. The case $d=1$ is obvious, so we suppose $d\geq 2$.

By the nerve theorem, a path in $Y$ from $p$ to $q$ can be regarded as a sequence $(\sigma_1, \dots ,\sigma_k)$ where the $\sigma_i \in Y$ are $(m-1)$-dimensional simplices, $p\in\sigma_1$, $q\in\sigma_k$, and $\sigma_i \cap \sigma_{i+1} \neq \emptyset$ for every $1\leq i <k$. Likewise, a path in $\xm$ from $f(p)$ to $f(q)$ can be regarded as a sequence $(f(\sigma_1), \dots , f(\sigma_k))$ where the $\sigma_i \in Y$ are $(m-1)$-simplices, $p\in \sigma_1$, $q\in\sigma_k$, and $f(\sigma_i)\cap f(\sigma_{i+1}) \neq \emptyset$ for every $1\leq i <k$. 

Now consider a path $(f(\sigma_1),\ldots, f(\sigma_k))$ in $\xm$. Suppose there are consecutive simplices $\sigma_i$ and $\sigma_{i+1}$ such that $\sigma_i\cap\sigma_{i+1} = \emptyset$, but $f(\sigma_i)\cap f(\sigma_{i+1}) \neq \emptyset$. Since $m>\frac{d+2}{2}$, there is a proper face $\tau$ of either $\sigma_i$ or $\sigma_{i+1}$ such that $f(\sigma_i)\cap f(\tau)\cap f(\sigma_{i+1}) \neq \emptyset$. Therefore $\tau$ is contained in an $(m-1)$-simplex $\tau_{i} \in Y$ such that $\sigma_i \cap \tau_i \neq\emptyset$, $\tau_i\cap \sigma_{i+1}\neq \emptyset$, and $f(\sigma_i) \cap  f(\tau_i) \cap f(\sigma_{i+1}) \neq \emptyset$. The nerve theorem implies that the paths, $(f(\sigma_1$, $\dots$, $f(\sigma_i), f(\sigma_{i+1})$, $\dots$, $f(\sigma_k))$ and $(f(\sigma_1)$, $\dots$, $f(\sigma_i), f(\tau_i), f(\sigma_{i+1})$, $\dots$, $f(\sigma_k))$ 
are homotopic in $\xm$. For each consecutive pair $f(\sigma_i)$, $f(\sigma_{i+1})$ such that $\sigma_i\cap\sigma_{i+1} = \emptyset$, this procedure can be repeated, thereby removing all such pairs. Therefore for any element $\gamma \in \pi_1(X)$ there is a $\gamma'\in \pi_1(Y)$ such that $\gamma$ and $f(\gamma')$ are homotopic. Since $m\geq 3$ we have $\pi_1(Y) = 0$ which completes the proof. 
\end{proof}

\begin{remark} The inequality $m>\frac{d+2}{2}$ is tight which can be seen by the following example. Let $d=2k$ and $Y = Y_1* \cdots *Y_{k+1}$ where $|Y_i|=2$. Then $Y \cong S^k$. We can map $Y$ into $\mathbb{R}^{d}$ so that a single pair of opposite $k$-simplices of $Y$ intersect in an interior point, resulting in a space homeomorphic to a $k$-sphere with a single pair of antipodal points identified. Such a space is not simply connected.
\end{remark}

\section{Dimensions 2 and 3} \label{dim2-3-sec}

It was established in \cite{bois1991}, and independently in \cite{tot2010}, that for $d=2$ the geometric join $X_{[3]}$ is starshaped, which implies that Conjecture \ref{conj-main} holds for $d=2$. It is easily seen that their arguments extend to $m > 3$. Here we establish the next case of our Conjecture. 

\begin{theorem}\label{conj:d3}
If $d=3$ and $m\geq 4$, then $\xm$ is contractible.  
\end{theorem}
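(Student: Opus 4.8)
The plan is to combine the simple connectivity already established in Theorem~\ref{simp-conn} with a proof that $\xm$ encloses no bounded region of $\R^3$. Recall that $\xm$ is a compact polyhedron (a finite union of colorful simplices), hence has the homotopy type of a CW complex, and for $m\ge 4$ Theorem~\ref{simp-conn} already gives that it is simply connected, since $4>\frac{d+2}{2}=\frac52$. By Hurewicz and Whitehead it therefore suffices to prove that the reduced homology of $\xm$ vanishes, and as $\xm$ is a compact \emph{proper} subset of $\R^3$, Alexander duality in $S^3$ gives $\tilde H_3(\xm)=0$ together with $\tilde H_2(\xm)\cong\tilde H^0(S^3\setminus\xm)$. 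Combined with connectedness and simple connectivity, the only reduced homology that could survive is $H_2$, and the displayed isomorphism shows that $H_2(\xm)=0$ precisely when $\R^3\setminus\xm$ is connected. Thus the whole theorem reduces to the purely geometric statement that for $d=3$ and $m\ge 4$ the complement $\R^3\setminus\xm$ has no bounded component.

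Next I would locate the ``deepest point'' of a hypothetical void. Suppose $\R^3\setminus\xm$ has a bounded component $U$, and let $x_0\in U$ maximize the distance function $\rho(x)=\dist(x,\xm)$ on $\overline U$. Then $x_0$ is an interior local maximum of $\rho$, so exactly as in Section~\ref{morse-dist-sec} the open ball $B(x_0,\rho(x_0))$ is disjoint from $\xm$ and $x_0\in\inte\conv P(x_0)$, where $P(x_0)$ is the set of nearest points. By Carath\'eodory's theorem in $\R^3$ one may pick $y_1,\dots,y_4\in P(x_0)$ with $x_0\in\inte\conv\{y_1,\dots,y_4\}$; each $y_i$ lies in a colorful face $S_i$ with $|S_i|\le 3$, and $S_i$ is contained in the supporting halfspace $\{w:\langle w-y_i,\,x_0-y_i\rangle\le 0\}$ tangent to the empty ball at $y_i$. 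Writing $x_0$ as a convex combination of the $y_i$ and then of the vertices of the $S_i$ shows $x_0\in\conv\bigl(\bigcup_i S_i\bigr)$; hence if $\bigcup_i S_i$ were colorful we would get $x_0\in\xm$, a contradiction. So a void forces repetitions of colors in $S_1\cup\cdots\cup S_4$, and, arguing as in the proof of Theorem~\ref{contr-dsq} with the halfspaces $H_i^+=\{w:\langle w-y_i,x_0-y_i\rangle>0\}$, every color that occurs must occur at least twice.

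Finally I would break the configuration using the strong colorful Carath\'eodory theorem~\cite{aro2009,hol2008} together with the planar case. Since $x_0\notin\xm\supseteq\bigcap_{i\ne j}\conv(X_i\cup X_j)$, there is a pair of colors $a,b$ and a plane strictly separating $x_0$ from $X_a\cup X_b$. The idea is to use this separation to choose, among the doubled occurrences of each color in $\bigcup_i S_i$, one representative per color so that the selected points span a \emph{single} colorful simplex still containing $x_0$: the separating plane dictates which copy of the two distinguished colors to discard, and the remaining ambiguity is resolved by projecting the relevant part of the configuration to a plane, where the obstruction becomes a statement about a planar geometric join of at least three color classes. Such a planar join is \emph{starshaped}, hence has connected complement, by the two-dimensional case~\cite{bois1991,tot2010}. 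Producing a colorful simplex through $x_0$ contradicts $x_0\notin\xm$, so no void exists and $\xm$ is contractible.

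I expect this last step --- extracting a genuine colorful simplex through $x_0$ from the doubled-color configuration by combining the two-color separation with planar starshapedness --- to be the main obstacle. It is precisely here that the value $d=3$ enters (four nearest points, colorful faces of dimension at most two, and a planar join of at least three colors), and the argument is a delicate case analysis rather than a single clean estimate: the color-counting bound of Theorem~\ref{contr-dsq} alone yields only at most six colors, so the extra geometric input is indispensable for reaching $m=4$.
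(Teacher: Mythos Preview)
Your topological reduction in the first paragraph is correct and matches the paper's: simple connectivity from Theorem~\ref{simp-conn}, plus vanishing of $H_2$ via connectedness of the complement (the paper states this tersely, without spelling out Alexander duality, but the content is identical). The genuine gap is exactly where you locate it, in the third paragraph, and the paper does not close it along the lines you sketch. Your deepest-point setup yields four colorful faces $S_1,\dots,S_4$ tangent to a ball with every color repeated at least twice, but the step ``choose one representative per color so that the selected points span a single colorful simplex still containing $x_0$'' is not justified. The appeal to a separating plane for two colors plus projection to a planar three-color join is too vague to do the work: there is no natural plane on which the tangent configuration becomes a planar geometric join, and even after discarding one copy of each of the two distinguished colors it is unclear how the remaining ambiguities can be resolved to force a colorful tetrahedron through $x_0$. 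This is not a minor detail to be filled in; it is the entire content of the theorem beyond the $m>6$ bound you already know from Theorem~\ref{contr-dsq}.

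The paper avoids the deepest-point apparatus entirely for $d=3$ and instead proves the stronger Claim~\ref{claim:r3}: from \emph{every} $x_0\notin\xm$ there is an infinite ray missing $\xm$. The argument is carried out on the unit sphere about $x_0$. The strong colorful Carath\'eodory theorem supplies two colors, say $X_1,X_2$, lying in an open hemisphere; the key geometric input is not the three-color planar starshapedness you cite but rather Claim~\ref{obs:starshape}: for \emph{two} color classes in the plane, the filled-in join $\tilde X$ (the complement of the unbounded component of $\R^2\setminus X_{[2]}$) is starshaped. One then observes that the boundary of $\tilde X$ cannot separate the antipodes $-X_3,\dots,-X_m$ (else some colorful tetrahedron would contain $x_0$), and takes the escape direction to be $-c$, where $c$ is the star center of $\tilde X$; a short three-case analysis rules out any colorful triangle covering $-c$. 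The idea your sketch is missing is precisely this two-color filled-in join $\tilde X$ and the explicit choice of ray direction from its star center, which replaces the hoped-for extraction of a colorful simplex from the $S_i$.
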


The proof of Theorem \ref{conj:d3} is based on two observations. The first one is the ``strong'' colorful Carath{\'e}odory theorem, established independently in \cite{aro2009} and \cite{hol2008}.

\begin{lemma}\label{obs:colorcara}
Let $m\geq d+1$ and suppose that the origin is not contained in $\xm$. Then there exists $1\leq i < j \leq m$ and an affine hyperplane that strictly separates $X_i\cup X_j$ from the origin.  
\end{lemma}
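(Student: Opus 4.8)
The plan is to locate the separating hyperplane by a closest-point minimization, in the spirit of B\'ar\'any's proof of the colorful Carath\'eodory theorem. Since the $X_i$ are finite there are only finitely many colorful simplices, and as $0\notin\xm$ none of them contains the origin. First I would pick, among all colorful simplices, one $\sigma^*=\conv\{x_i : i\in A\}$ (with distinct colors $A\subseteq[m]$ and $x_i\in X_i$) minimizing the distance to the origin, and among those one with the fewest vertices. Let $p\neq 0$ be the point of $\sigma^*$ nearest the origin. A standard argument shows that $p$ lies in the relative interior of $\sigma^*$: otherwise $p$ would lie on a proper face, itself a colorful simplex of the same minimal distance but with fewer vertices. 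By the same minimality the vertices of $\sigma^*$ are affinely independent, and since the supporting hyperplane $H=\{x:\langle x,p\rangle=|p|^2\}$ at $p$ must then contain all of $\sigma^*$, we get $|A|\le d$; moreover the origin lies strictly in $H^-=\{x:\langle x,p\rangle<|p|^2\}$.

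Next I would record the exchange property that every unused color $k\notin A$ satisfies $X_k\subseteq H^+$. Indeed, if some $x_k'\in X_k$ had $\langle x_k',p\rangle<|p|^2$, then $\conv(\sigma^*\cup\{x_k'\})$ would be a colorful simplex (the color $k$ being new), and moving from the relative-interior point $p$ towards $x_k'$ strictly decreases the distance to the origin, contradicting minimality. The number of unused colors is $m-|A|\ge m-d\ge 1$. If there are two unused colors $c,c'$, then $X_c\cup X_{c'}\subseteq H^+$, so any hyperplane strictly between $H$ and the origin---say $\{x:\langle x,p\rangle=\tfrac12|p|^2\}$---strictly separates $X_c\cup X_{c'}$ from $0$, giving the required pair. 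This settles all cases except $m=d+1$ with $|A|=d$.

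That remaining case is precisely the ``strong'' colorful Carath\'eodory theorem for $d+1$ colors and is the heart of the matter: here $\sigma^*$ is a $(d-1)$-simplex lying in $H$ with $p$ in its relative interior, and there is a single exceptional color $c$ with $X_c\subseteq H^+$. I would attack it by induction on $d$ after projecting orthogonally onto $H$. Because an affine projection $\pi$ sends convex hulls to convex hulls and carries $0$ to $p$, the projected classes $\{\pi(X_i)\}_{i\in A}$ inherit the pairwise hypothesis ``$p\in\conv(\pi(X_i)\cup\pi(X_j))$'' from ``$0\in\conv(X_i\cup X_j)$,'' so the $d$ classes in the $(d-1)$-dimensional space $H$ fall under the inductive case. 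The main obstacle is the lift: the inductive hypothesis yields a colorful simplex through $p$ inside $H$, but reconstructing from it a colorful simplex of $\R^d$ through the origin---correctly feeding in the exceptional color $c$ to recover the coordinate normal to $H$ that the projection has discarded---is delicate. This is exactly the step carried out in \cite{aro2009,hol2008}; I would either invoke their argument directly, or replace the orthogonal projection by a central projection from a point of $X_c$, so that the exceptional color is incorporated into the reduction from the start.
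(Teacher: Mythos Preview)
The paper does not prove this lemma at all: it is simply stated as the ``strong'' colorful Carath\'eodory theorem and attributed to \cite{aro2009,hol2008}. So your proposal is not competing against a proof in the paper but against a bare citation, and in the end your sketch also rests on citing those same references for the essential step.

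Your closest-simplex argument is correct and standard: the minimality forces $\sigma^*\subset H$, hence $|A|\le d$, and the exchange step shows every unused color lies in $H^+$. This disposes of the case of two or more unused colors. (A quicker reduction, incidentally: if $m>d+1$, restrict to any $d+1$ of the colors; the hypothesis $0\notin X_{[m]}$ is inherited by the subfamily, so the whole lemma reduces immediately to $m=d+1$.)

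Where your sketch wobbles is the inductive treatment of the remaining case $m=d+1$, $|A|=d$. You implicitly switch to the contrapositive (assuming $0\in\conv(X_i\cup X_j)$ for all pairs) without saying so, and then apply the inductive hypothesis in $H$ to conclude that $p$ lies in the geometric join of the projected classes $\{\pi(X_i)\}_{i\in A}$. But you already know this: $p$ lies in $\sigma^*\subset H$, which is itself a colorful simplex for those projected classes. The induction, as you have set it up, yields nothing new; the entire content sits in the ``lift'' you flag as delicate and then hand off to \cite{aro2009,hol2008}. Your alternative suggestion of a central projection from a point of $X_c$ is the more honest route, since it folds the exceptional color into the reduction rather than leaving it to be reinserted afterwards --- but you do not carry that out either. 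In short, your proposal is a correct reduction to the theorem of \cite{aro2009,hol2008} plus a citation of that theorem, which is exactly what the paper does; just do not present the orthogonal-projection induction as though it were doing real work.
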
 

\begin{remark}
In fact there exists a subset $I\subset [m]$ with $|I|\geq m-d+1$ and an affine hyperplane which strictly separates $\cup_{i\in I}X_i$ from the origin, but for our purpose we only need Lemma \ref{obs:colorcara} as stated.    
\end{remark}

The second observation extends the fact that the geometric join in the plane is starshaped. Let $X_1$ and $X_2$ be finite sets in $\R^2$, and consider their geometric join $X_{[2]}$. The complement $\mathbb{R}^2\setminus X_{[2]}$ is a collection of open regions, one of which is unbounded. Let $\tilde{X}$ denote the complement of the (unique) unbounded region. Obviously $\tilde{X}$ is a compact region, but the following also holds.

\begin{claim}\label{obs:starshape}
For finite sets $X_1$ and $X_2$ in $\mathbb{R}^2$, the set $\tilde{X}$ is starshaped. 
\end{claim}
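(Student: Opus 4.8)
The plan is to describe $\tilde X$ explicitly as the convex hull $H=\conv(X_1\cup X_2)$ with finitely many \emph{bays} removed, and then to locate a kernel point by a Helly argument. First I would note that $X_{[2]}\subseteq H$, so the unbounded component $U$ of $\R^2\setminus X_{[2]}$ contains $\R^2\setminus H$; hence $\tilde X\subseteq H$ and $\tilde X=H\setminus(U\cap H)$. Now $U$ can reach into $H$ only across those edges of $\partial H$ that are \emph{not} contained in $X_{[2]}$, namely the edges joining two consecutive hull vertices of the same colour; a bichromatic hull edge is itself a join segment $[x_1,x_2]$ and so lies in $X_{[2]}$ and blocks $U$. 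Thus $U\cap H$ splits into finitely many bays, one hanging from each monochromatic hull edge, and the part of $\partial\tilde X$ bounding a bay (other than that hull edge) consists of pieces of join segments $[x_1,x_2]$ with $x_1\in X_1$ and $x_2\in X_2$.

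Next I would reduce starshapedness to a statement about half-planes. Since $\tilde X$ is the closure of a union of bounded faces of an arrangement of segments, its boundary is a union of segments each lying on a join segment, and $\tilde X$ is simply connected by construction; a point $s$ lies in the kernel of $\tilde X$ if and only if $s$ lies in the inner closed half-plane $H_e$ of every boundary edge $e$ (the half-plane bounded by the line through $e$ and lying, locally, on the side of $\tilde X$). This is the classical description of the kernel of a simple polygon, and it survives the mild degeneration in which $\tilde X$ is pinched at a point, as already happens for two crossing join segments. Since there are finitely many boundary edges and each $H_e$ is convex, Helly's theorem in the plane gives $\bigcap_e H_e\neq\emptyset$ as soon as every three of the $H_e$ have a common point. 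So it suffices to prove that any three inner half-planes of boundary edges meet.

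This three half-plane condition is the main obstacle. Each boundary edge lies on a join segment and $\tilde X$ sits on a prescribed side of it, roughly the side facing away from the monochromatic hull edge of the bay it bounds, so the three inner half-planes fail to meet only if their three support lines form a triangle with all three half-planes on its outer sides. I would rule this configuration out using the bichromatic nature of the segments: the three support lines carry join segments $[a_i,b_i]$ with $a_i\in X_1$ and $b_i\in X_2$, and an outward triple would force the six endpoints to be separated from the bulk of $H$ in a manner incompatible with each segment having one endpoint in each colour class on the boundary of a common convex hull. Concretely I expect to organise the bays cyclically around $\partial H$ and verify compatibility of the half-plane constraints for \emph{consecutive} bays, using that between two bays lies a bichromatic hull edge contained in $X_{[2]}$; this should reduce the planar Helly test to a one-dimensional circular test along $\partial H$. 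Carrying out this case analysis cleanly, and in particular handling bays whose inner boundary is a polyline of several join segments so that $\partial\tilde X$ acquires reflex vertices, is the step I expect to require the most care.
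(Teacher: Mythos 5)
Your proposal has a genuine gap at exactly the point where the mathematical content of the claim lives. The structural description ($\tilde X$ as $\conv(X_1\cup X_2)$ minus bays hanging from monochromatic hull edges) and the Helly reduction are plausible framing, but the resulting task --- showing that \emph{every} three inner half-planes of boundary edges have a common point --- is never proved. What you write is an expectation (``I would rule this configuration out\dots'', ``I expect to organise the bays cyclically\dots this should reduce the planar Helly test to a one-dimensional circular test''), not an argument. Moreover, the proposed reduction to \emph{consecutive} bays is unjustified: Helly's theorem requires all triples, including triples of half-planes coming from bays far apart on $\partial H$, and you give no monotonicity or ordering principle that would let you check only adjacent ones. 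Since the non-emptiness of this intersection is precisely the assertion that the kernel is non-empty, i.e.\ the claim itself, the proposal essentially restates the problem in a different language without solving it. There is also a secondary soundness issue: the characterization ``kernel $=$ intersection of inner half-planes of boundary edges'' is classical for \emph{simple} polygons, but $\tilde X$ need not be one --- it can be pinched at crossing points (as you note), and it can even contain one-dimensional filaments, e.g.\ a join segment with the unbounded region on both sides (already for $|X_1|=2$, $|X_2|=2$ with the two colour classes not interleaved on the hull, or for collinear configurations), where ``inner half-plane'' is not defined; so this reduction would itself require proof.

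For comparison, the paper avoids the kernel/Helly machinery entirely by an extremal construction: direct every join segment from $X_1$ to $X_2$, assign to each pair of segments an angle in $[0,\pi]$, and pick a pair $e_1,e_2$ that meet and maximize this angle. The crossing point $x$ is then shown directly to be a star center: by maximality of the angle, all points of $X_1\cup X_2$ lie in the two closed antipodal sectors at $x$, and a short case analysis shows every join segment $[v,w]$ is visible from $x$ inside a $4$-cycle of the bipartite drawing (hence inside $\tilde X$). That extremal choice is the single idea your outline is missing; if you want to salvage your approach, you would need an actual proof of the triple-intersection property, and it is not clear this is any easier than producing a kernel point outright.
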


\begin{proof} The claim is obvious if $|X_1|=1$ or $|X_2|=1$. The geometric join $X_{[2]}$ can be regarded as a drawing of a complete bipartite graph in the plane where edges are drawn as straight segments. Direct each edge so that it goes from a vertex $v\in X_1$ to a vertex $w\in X_2$. Then we can assign a unique angle $\alpha \in [0,\pi]$ to each pair of edges. Choose a pair of edges $e_1, e_2$ which  
\begin{itemize}
\item have a point in common (which may or may not be a vertex), and
\item maximize the angle $\alpha_0$. 
\end{itemize}
We consider the case where $e_1$ and $e_2$ intersect in interior points and $\alpha_0<\pi$. (The cases when $e_1$ and $e_2$ intersect in a vertex or when $\alpha_0=\pi$ are treated similarly.) Suppose $e_1 = [v_1, w_1]$, $e_2 = [v_2, w_2]$, $v_i\in X_1$, $w_i\in X_2$, and $x = e_1\cap e_2$. We show that any edge $e = [v,w]$, where $v\in X_1$ and $w\in X_2$, is visible from $x$ within a bounded region enclosed by some cycle of $X_{^{[2]}}^2$. This will prove that $\tilde{X}$ is starshaped from $x$. By the maximality of the angle $\alpha_0$ the points of $X_1\cup X_2$ must be contained in the closed antipodal sectors bounded by $\angle v_1xv_2$ and $\angle w_1xw_2$. Up to symmetries there are two cases to consider. 
\begin{enumerate}
\item $e$ is contained in the closed sector bounded by $\angle v_1xv_2$, and the line containing $e$ intersects the ray from $x$ through $v_1$. 
\item $e$ crosses the closed sector bounded by $\angle v_1xw_2$.
\end{enumerate}
Case (1) splits into two subcases: (a) The ray from $v$ through $w$
intersects the ray from $x$ through $v_1$. Then the edge $e$ is
visible from $x$ within the cycle $v w v_1 w_1$. See figure below
(left). (b) The ray from $w$ through $v$ intersects the ray from $x$
through $v_1$. Then the edge $e$ is visible from $x$ within the cycle
$v w v_2 w_2$. See figure below (right). 

\begin{center}
\begin{tikzpicture}
  \begin{scope}[scale=.9]
    \begin{scope}[rotate = 20]
      \begin{scope}
        \fill[blue, opacity = .12] (.2,1.2) -- (.8,1.8) -- (310:1.8cm) -- 
        (130:2.2cm) -- cycle; 
        \draw[blue, opacity =.8] (.2,1.2) -- (130:2.2cm) (.8,1.8) --
        (310:1.8cm); 
        \fill[gray, opacity =.4] (0,0) -- (.2,1.2) -- (.8,1.8) -- cycle;
      \end{scope}
      \begin{scope}
        \fill (0:2cm) circle [radius = .07];  
        \fill (180:2.4cm) circle [radius = .07];  
        \fill (130:2.2cm) circle [radius = .07];  
        \fill (310:1.8cm) circle [radius = .07];  
        \draw (0:2cm) -- (180:2.4cm);
        \draw (130:2.2cm) -- (310:1.8cm);
        \fill[white] (0,0) circle [radius = .07];
        \draw (0,0) circle [radius = .07];
      \end{scope}
      \begin{scope}
        \draw[dotted] (-2,-1) --(.2,1.2)  (.8,1.8) -- (1.2,2.2);
        \fill (.2,1.2) circle [radius = .07];
        \fill (.8,1.8) circle [radius = .07];
        \draw (.2,1.2) -- (.8,1.8);
      \end{scope}
      \begin{scope}
        \node [below] at (0,0) {\small $x$};
        \node [left] at (180:2.4cm) {\small $w_2$};
        \node [left] at (130:2.2cm) {\small $v_1$};
        \node [right] at (0:2cm) {\small $v_2$};
        \node [right] at (310:1.8cm) {\small $w_1$};
        \node at (0.15,1.5) {\small $w$};
        \node at (.8,2.05) {\small $v$};
      \end{scope}
    \end{scope}
\begin{scope}[xshift = 7cm]
  \begin{scope}[rotate = 20]
    \begin{scope}
      \fill[blue, opacity = .12] (.2,1.2) -- (.8,1.8) -- (0:2cm) --
      (180:2.4cm) -- cycle;  
      \draw[blue, opacity = .8] (.2,1.2) -- (180:2.4cm) (.8,1.8) -- (0:2cm);
      \fill[gray, opacity =.4] (0,0) -- (.2,1.2) -- (.8,1.8) -- cycle;
    \end{scope}
    \begin{scope}
      \fill (0:2cm) circle [radius = .07];  
      \fill (180:2.4cm) circle [radius = .07];  
      \fill (130:2.2cm) circle [radius = .07];  
      \fill (310:1.8cm) circle [radius = .07];  
      \draw (0:2cm) -- (180:2.4cm);
      \draw (130:2.2cm) -- (310:1.8cm);
      \fill[white] (0,0) circle [radius = .07];
      \draw (0,0) circle [radius = .07];
    \end{scope}
    \begin{scope}
      \draw[dotted] (-2,-1) --(.2,1.2)  (.8,1.8) -- (1.2,2.2);
      \fill (.2,1.2) circle [radius = .07];
      \fill (.8,1.8) circle [radius = .07];
      \draw (.2,1.2) -- (.8,1.8);
    \end{scope}
    \begin{scope}
      \node [below] at (0,0) {\small $x$};
      \node [left] at (180:2.4cm) {\small $w_2$};
      \node [left] at (130:2.2cm) {\small $v_1$};
      \node [right] at (0:2cm) {\small $v_2$};
      \node [right] at (310:1.8cm) {\small $w_1$};
      \node at (0.15,1.5) {\small $v$};
      \node at (.8,2.05) {\small $w$};
    \end{scope}
  \end{scope}
\end{scope}
\end{scope}
\end{tikzpicture}
\end{center}

Case (2) splits into two subcases: (a) $v$ is contained in the closed sector bounded by $\angle v_1xv_2$. See figure below (left). (b) $w$ is contained in the closed sector bounded by $\angle v_1xv_2$. See figure below (right). In both cases the edge $e$ is visible from $x$ within the cycle $v w v_2 w_1$. 
\end{proof}

\begin{center}
\begin{tikzpicture}
  \begin{scope}[scale=.9]
    \begin{scope}[rotate = 20]
      \begin{scope}
        \fill[blue, opacity = .12] (.2,1.2) -- (-1.5,-.5) -- (0:2cm) -- 
        (310:1.8cm) -- cycle; 
        \draw[blue, opacity =.8] (-1.5,-.5) -- (0:2cm) -- (310:1.8cm)
        -- (.2,1.2);
        \fill[gray, opacity =.4] (0,0) -- (-1.5,-.5) -- (.2,1.2) -- cycle;
      \end{scope}
      \begin{scope}
        \fill (0:2cm) circle [radius = .07];  
        \fill (180:2.4cm) circle [radius = .07];  
        \fill (130:2.2cm) circle [radius = .07];  
        \fill (310:1.8cm) circle [radius = .07];  
        \draw (0:2cm) -- (180:2.4cm);
        \draw (130:2.2cm) -- (310:1.8cm);
        \fill[white] (0,0) circle [radius = .07];
        \draw (0,0) circle [radius = .07];
      \end{scope}
      \begin{scope}
        \draw[dotted] (-2,-1) --(-1.5,-.5)  (.2,1.2) -- (.7,1.7);
        \fill (.2,1.2) circle [radius = .07];
        \fill (-1.5,-.5) circle [radius = .07];
        \draw (-1.5,-.5) -- (.2,1.2);
      \end{scope}
      \begin{scope}
        \node [below] at (0,0) {\small $x$};
        \node [left] at (180:2.4cm) {\small $w_2$};
        \node [left] at (130:2.2cm) {\small $v_1$};
        \node [right] at (0:2cm) {\small $v_2$};
        \node [right] at (310:1.8cm) {\small $w_1$};
        \node at (0.15,1.5) {\small $v$};
        \node at (-1.3,-.8) {\small $w$};
      \end{scope}
    \end{scope}
\begin{scope}[xshift = 7cm]
  \begin{scope}[rotate = 20]
      \begin{scope}
        \fill[blue, opacity = .12] (.2,1.2) -- (-1.5,-.5)  -- 
        (310:1.8cm) -- (0:2cm) -- cycle; 
        \draw[blue, opacity =.8] (-1.5,-.5)  -- (310:1.8cm) -- (0:2cm)
        -- (.2,1.2);
        \fill[gray, opacity =.4] (0,0) -- (-1.5,-.5) -- (.2,1.2) -- cycle;
      \end{scope}
      \begin{scope}
        \fill (0:2cm) circle [radius = .07];  
        \fill (180:2.4cm) circle [radius = .07];  
        \fill (130:2.2cm) circle [radius = .07];  
        \fill (310:1.8cm) circle [radius = .07];  
        \draw (0:2cm) -- (180:2.4cm);
        \draw (130:2.2cm) -- (310:1.8cm);
        \fill[white] (0,0) circle [radius = .07];
        \draw (0,0) circle [radius = .07];
      \end{scope}
      \begin{scope}
        \draw[dotted] (-2,-1) --(-1.5,-.5)  (.2,1.2) -- (.7,1.7);
        \fill (.2,1.2) circle [radius = .07];
        \fill (-1.5,-.5) circle [radius = .07];
        \draw (-1.5,-.5) -- (.2,1.2);
      \end{scope}
      \begin{scope}
        \node [below] at (0,0) {\small $x$};
        \node [left] at (180:2.4cm) {\small $w_2$};
        \node [left] at (130:2.2cm) {\small $v_1$};
        \node [right] at (0:2cm) {\small $v_2$};
        \node [right] at (310:1.8cm) {\small $w_1$};
        \node at (0.15,1.5) {\small $w$};
        \node at (-1.3,-.8) {\small $v$};
      \end{scope}
  \end{scope}
\end{scope}
\end{scope}
\end{tikzpicture}
\end{center}

Theorem \ref{conj:d3} will be deduced from the following slightly stronger claim. 

\begin{claim} \label{claim:r3}
Let $m\geq 4$ and suppose the origin is not contained in $\xm$. Then there exists an infinite ray $R\subset \R^3$ from the origin such that $\xm \cap R = \emptyset$.    
\end{claim}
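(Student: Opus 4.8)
The plan is to pass to the radial projection $\pi\colon \R^3\setminus\{0\}\to S^2$, $\pi(x)=x/|x|$. Since $\xm$ is compact and $0\notin\xm$, a ray from the origin in direction $u$ misses $\xm$ if and only if $u\notin\pi(\xm)$, so the assertion is equivalent to $\pi(\xm)\neq S^2$, and I only need to exhibit one uncovered direction. First I would invoke Lemma~\ref{obs:colorcara}: as $m\geq 4=d+1$, two of the classes, say $X_1$ and $X_2$, are strictly separated from the origin by a hyperplane. I normalize coordinates so that this hyperplane is $\{x_3=1\}$ with $X_1\cup X_2\subset\{x_3>1\}$ and the origin at height $0$; then $\pi(X_1\cup X_2)$ lies in the open upper hemisphere $\{u_3>0\}$. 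Because $0\notin\xm$, every colorful set $Y$ has $0\notin\conv Y$, so each $\pi(\conv Y)$ is a geodesically convex polygon inside some open hemisphere, and $\pi(\xm)$ is the union of these polygons. I expect the uncovered direction to sit in the lower hemisphere, away from the two separated classes.

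To find it, I would apply the gnomonic (central) projection $g\colon u\mapsto -u/u_3$ of the open lower hemisphere $\{u_3<0\}$ onto the plane $\Pi=\{x_3=-1\}$, which carries geodesically convex sets to convex sets. Under $g$ the covered part becomes $L:=g(\pi(\xm)\cap\{u_3<0\})$, the central image of $\xm\cap\{x_3<0\}$, which is a union of convex cells of $\Pi\cong\R^2$ indexed by the colorful sets $Y$ dipping below $\{x_3=0\}$. A cell is bounded exactly when all vertices of $Y$ are low; these bounded cells are precisely the colorful simplices of the planar configuration obtained by centrally projecting the low points of $X_3,\dots,X_m$ into $\Pi$, so their union is a genuine planar geometric join, to which the planar results recalled above (starshapedness of the join of $\geq 3$ planar classes, and Claim~\ref{obs:starshape} when only two classes survive) apply. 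A cell is unbounded exactly when $Y$ uses a high vertex; since $Y$ is colorful it uses at most one point of $X_1$ and at most one of $X_2$, and the recession direction of such a cell is the equatorial direction of the point where the high--low edge meets $\{x_3=0\}$. Thus the behaviour of $L$ near infinity in $\Pi$ is organized by the two separated classes alone.

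The concluding step is to produce a point of $\Pi\setminus L$, equivalently a lower direction outside $\pi(\xm)$. Here I would feed the crossing points of the high--low edges into Claim~\ref{obs:starshape}, taking as the two planar sets the traces governed by $X_1\cup X_2$ on one side and the low points on the other; its starshapedness conclusion controls which rays-to-infinity of $\Pi$ (i.e. which near-equatorial directions) are caught by the unbounded cells, while compactness of the low part of $\xm$ keeps the bounded cells confined. Combining the two planar facts then forces $L\neq\Pi$, and the resulting point lifts to the desired ray $R$. The hard part, and where I would concentrate the work, is the bookkeeping of the middle paragraph made rigorous: showing that the bounded cells assemble into an honest planar geometric join, that the unbounded cells are faithfully captured by the two-set construction of $X_1,X_2$ so that Claim~\ref{obs:starshape} applies, and that the separation keeps the origin from being enclosed, including the borderline colorful simplices that meet $\{x_3=0\}$ without having a vertex on it (handled by a generic-position perturbation, as in Theorem~\ref{morse-dist-contr}). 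Once this identification is secured, the two planar observations close the argument.
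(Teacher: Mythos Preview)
Your opening is exactly right and coincides with the paper's: radially project to $S^2$, use Lemma~\ref{obs:colorcara} to put two classes (say $X_1,X_2$) into an open hemisphere, and then bring in Claim~\ref{obs:starshape}. But from that point on your outline has two real problems.

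First, your model of the unbounded cells is too optimistic. You write that ``the recession direction of such a cell is the equatorial direction of the point where the high--low edge meets $\{x_3=0\}$'' and conclude that ``the behaviour of $L$ near infinity in $\Pi$ is organized by the two separated classes alone.'' But nothing forces the high vertices to come only from $X_1\cup X_2$: points of $X_3,\dots,X_m$ may lie in $\{x_3>0\}$ as well, and a colorful $Y$ with several high vertices (of any colour) produces a cell whose recession cone is a genuine two-dimensional sector, not a single ray. So the behaviour at infinity is \emph{not} organised by $X_1,X_2$ alone, and the proposed application of Claim~\ref{obs:starshape} to ``the traces governed by $X_1\cup X_2$ on one side and the low points on the other'' does not match the actual geometry.

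Second, and more seriously, the decisive step is only asserted: ``Combining the two planar facts then forces $L\neq\Pi$.'' Even granting that the bounded cells assemble into a starshaped planar join and that some starshaped object controls the unbounded cells, you give no mechanism by which their union fails to cover $\Pi$; a compact starshaped set together with a separate family of unbounded wedges can certainly cover a plane. The paper closes this gap by producing a specific candidate direction in advance rather than trying to show non-surjectivity abstractly: form the starshaped region $\tilde X$ from the spherical join of $X_1,X_2$, argue (using $0\notin\xm$) that the antipodes $-X_3,\dots,-X_m$ cannot straddle $\partial\tilde X$, reduce to the case where they all lie outside $\tilde X$, and then take the ray direction to be $-c$ with $c$ the starshapedness center of $\tilde X$. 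Verifying that no colorful triangle covers $-c$ is then a short three-case analysis on which of the colours $1,2$ appear among its vertices. That concrete candidate, and the accompanying case analysis, is what your argument is missing.
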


\begin{proof}
We may suppose the $X_i$'s are on the unit sphere centered at the origin and argue using spherical convexity. Suppose $X_1$ and $X_2$ are the sets found in Claim \ref{obs:colorcara}, so they are contained in some open hemisphere. We may therefore regard the geometric join of $X_1$ and $X_2$ as a planar geometric join, and let $\tilde{X}$ denote the starshaped set from Claim \ref{obs:starshape}.  

Notice that the boundary of $\tilde{X}$ cannot separate the set $(-X_{3})\cup\cdots\cup (-X_m)$. If there exists points $p,q \in (-X_{3})\cup\cdots\cup (-X_m)$ such that $p\in \tilde{X}$ and $q\notin\tilde{X}$, then there also exists $v\in X_{i} $ and $w\in X_j$, $3\leq i<j\leq m$, such that $-v$ and $-w$ are separated by the boundary of $\tilde{X}$. In this case the geodesic connecting $-v$ to $-w$ intersects the boundary of $\tilde{X}$, and a boundary segment of $\tilde{X}$ is made up of a geodesic connecting $a\in X_1$ and $b\in X_2$, which implies that the simplex spanned by $a, b, v, w$ contains the origin, contradicting the assumption that the origin is not contained in $X$.  

We may therefore assume that $p\cap \tilde{X}=\emptyset$ for every $p\in -(X_{3})\cup\cdots\cup (-X_m)$. If not, we reverse the argument and define $\tilde{X}$ for any pair $X_i$ and $X_j$ with $3\leq i < j \leq m$.

Let $c$ be the center of $\tilde{X}$. We show that $-c$ is the direction we are looking for. Suppose the contrary, that $-c$ is contained in some triangle $x_ix_jx_k$ spanned by points from distinct color classes $X_i$,  $X_j$, and $X_k$, respectively. This implies that the origin is contained in the simplex $cx_ix_jx_k$. Consider the following cases:  

\begin{enumerate}
\item If $i=1$, $j=2$, and $k>2$, then $-x_k$ is contained in the triangle $cx_1x_2$. This is a contradiction since the triangle $cx_1x_2$ is contained in $\tilde{X}$. 

\item If $i=1$ and $k>j>2$, then the geodesic connecting $-x_j$ and $-x_k$ intersects the geodesic connecting $c$ and $x_1$. But this implies that the geodesic connecting $-x_j$ and $-x_k$ intersects the boundary of $\tilde{X}$, which cannot happen (which was explained two paragraphs above). 

\item If $k>j>i>2$, then consider a point $v\in X_1$. Either the simplex $vx_ix_jx_k$ contains the origin, or $-c$ is covered by a triangle involving the vertex $v$, which puts us in case (2) above. 
\end{enumerate} 
\end{proof}

We are now in position to prove Theorem \ref{conj:d3}.

\begin{proof}
Since $m\geq d=3$, Theorem \ref{simp-conn} implies that $\xm$ is simply connected, and Claim \ref{claim:r3} implies that the second homology group of $\xm$ vanishes. It follows that $X$ is contractible. 
\end{proof}

\section{Geometric joins of matroids} \label{matroid}

Kalai and Meshulam \cite{kalmesh} showed that the color classes in the colorful Helly theorem can be replaced by an arbitrary matroid. A similar generalization was given for the ``strong'' colorful Carath\'{e}odory theorem in \cite{hol-int} and for the Colorful Hadwiger transversal theorem in \cite{hol2014}. The purpose of this last section is to show that the notion of geometric joins can be generalized in the same way, and that most of our methods from the previous chapters work in this more general setting.

Let us recall that a matroid $M$ on a finite set $E$ can be defined as a non-empty family of subsets of $E$ called the \df{independent sets} which satisfy the following properties:

\begin{itemize}
\item If $B$ is independent and $A\subset B$, then $A$ is independent.
\item If $A$ and $B$ are independent and $|A| < |B|$, then there exists an element $b\in B\setminus A$ such that $A\cup \{b\}$ is independent. 
\end{itemize}

The second condition is often called the {\em independence augmentation axiom} for matroids. We will assume that the union of all independent sets equals the ground set $E$, which is the same as restricting ourselves to matroids which are loopless. For a subset $S\subset E$ the rank of $S$, denoted by $\mbox{rk}(S)$, is the maximum cardinality of an independent set contained in $S$, and the rank of the matroid equals $\mbox{rk}(E)$. Notice that the independent sets of a matroid form an abstract simplicial complex which is often referred to as the \df{independence complex} of the matroid.

\begin{definition}
Let $E\subset \R^d$ be a finite set and let $M$ be a matroid defined on $E$. The \df{geometric join} of $M$ is the set of all convex combinations $t_1x_1 + \cdots + t_kx_k\subset \R^d$ where $\{x_1, \dots x_k\}$ is independent in $M$. The geometric join of a matroid $M$ of rank $r$ defined on a finite set of points in $\R^d$ is denoted by $\mrd$.
\end{definition}

Our previous definition of geometric join is obtained by noticing that the colorful subsets of a family of finite sets $X_1, \dots, X_m$ form the independent sets of a matroid. In the general case of a matroid, the convex hull of an independent set will be called an \df{independent simplex}. In other words, $\mrd$ is the union of all independent simplices of $M$.

As before, we can think of the geometric join of a matroid as a piecewise linear image of the independence complex into the ambient space $\R^d$ where the ground set of the matroid resides. It is a well-known fact that the independence complex of a matroid of rank $r$ is $(r-2)$-connected (see for instance \cite{BjKoLo}), but the homotopy type of $\mrd$ might be different from that of its independence complex. 

It should be clear from the discussion above that our main Problem can be studied in the setting of arbitrary matroids.   

\begin{probs}
\label{mat-join-conn-prob} Give sufficient conditions in terms of $r$ and $d$ for the contractibility or $k$-connectedness of $\mrd$. 
\end{probs}

It seems tempting to conjecture that $\mrd$ is contractible whenever $r>d$, which would imply our main Conjecture, but we have very little evidence to support this. We do however have the following generalization of Theorem \ref{join-starshaped}.

\begin{theorem}
  If $r > d(d+1)$, then $\mrd$ is starshaped.
\end{theorem}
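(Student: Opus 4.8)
The plan is to mirror the proof of Theorem~\ref{join-starshaped}, replacing the colour structure by the matroid exchange axiom. Since $r>d(d+1)$, fix an independent set $B\subseteq E$ with $|B|=d(d+1)+1$ (for instance a subset of a basis). Apply Tverberg's theorem to $B$, regarded as $d(d+1)+1$ points of $\R^d$: this yields a partition $B=B_1\cup\cdots\cup B_{d+1}$ into $d+1$ parts together with a point $t\in\bigcap_{j=1}^{d+1}\conv B_j$. I claim that $\mrd$ is starshaped from $t$.

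As in the proof of Theorem~\ref{join-starshaped}, it suffices to show that every boundary point $x$ of $\mrd$ can be seen from $t$. Such an $x$ lies in some independent simplex, and by Carath\'eodory's theorem applied in $\R^d$ we may take $x\in\conv Y$ with $Y$ independent and $|Y|\le d+1$; if $|Y|=d+1$ then $x$ would be interior to a full-dimensional independent simplex, hence interior to $\mrd$, unless it lies on a proper face, so after passing to that face we may assume $Y$ is independent with $|Y|\le d$.

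The heart of the argument is a matroid analogue of the pigeon-hole step. Extend $Y$ to a maximal independent subset of $Y\cup B$ using only elements of $B$; this produces $B'\subseteq B$ with $Y\cup B'$ independent and $|B\setminus B'|\le |Y|\le d$. Indeed, $Y\cup B'$ is a basis of $Y\cup B$, so $|Y\cup B'|=\operatorname{rk}(Y\cup B)\ge\operatorname{rk}(B)=|B|$, and hence $|B|\le|Y|+|B'|$, giving $|B\setminus B'|\le|Y|\le d$. The discarded set $B\setminus B'$ therefore has at most $d$ elements, so it meets at most $d$ of the $d+1$ parts; consequently some part $B_j$ satisfies $B_j\subseteq B'$. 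Then $Y\cup B_j\subseteq Y\cup B'$ is independent, so $\conv(Y\cup B_j)$ is an independent simplex contained in $\mrd$. Since $t\in\conv B_j$ and $x\in\conv Y$, both points lie in $\conv(Y\cup B_j)$, and therefore the segment $[t,x]$ is contained in $\mrd$.

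The main obstacle is exactly this last step: in the colour setting the fact that $Y$ uses at most $d$ colours and thus misses one of the $d+1$ parts is immediate, whereas for a general matroid one must argue via augmentation that at most $|Y|$ elements of $B$ need be sacrificed to keep the remainder independent together with $Y$, and only then does the pigeon-hole over the $d+1$ parts apply. Everything else---the choice $|B|=d(d+1)+1$ so that Tverberg's theorem produces exactly $d+1$ parts, and the reduction to boundary points through Carath\'eodory's theorem---transfers verbatim from the proof of Theorem~\ref{join-starshaped}.
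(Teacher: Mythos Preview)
Your proof is correct and follows essentially the same approach as the paper's: choose an independent set of size $d(d+1)+1$, apply Tverberg's theorem to obtain $d+1$ parts and a Tverberg point $t$, reduce via Carath\'eodory to a boundary point $x\in\conv Y$ with $|Y|\le d$, and then use the augmentation axiom to show that after adjoining all but at most $|Y|\le d$ elements of the large independent set to $Y$ one still has an independent set, so some Tverberg part survives intact and the segment $[t,x]$ lies in an independent simplex. The only cosmetic difference is that the paper stops augmenting once $|S\cup Y|=d(d+1)+1$, whereas you continue to a maximal independent subset of $Y\cup B$; both yield $|B\setminus B'|\le d$ and the pigeon-hole conclusion is identical.
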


\begin{proof} 
The proof is identical to the proof of Theorem \ref{join-starshaped}. Suppose the rank of $M$ is greater than $d(d+1)$ and let $T$ be an independent set of size $d(d+1)+1$. By Tverberg's theorem there exists a partition $T = T_1 \cup \cdots \cup T_{d+1}$ and a point $t\in \R^d$ such that $t \in \conv T_i$ for every $i$. We will show that every point $x\in M_r^d$ is visible from $t$.  

It suffices to consider the case when $x$ belongs to the boundary of $\mrd$, so by Carath\'eodory's theorem we may assume $x$ belongs to an independent simplex of dimension at most $d-1$. Thus $x$ is contained in the convex hull of an independent set $Y$ with $|Y| \leq d$. By repeated application of the independence augmentation axiom, there is a subset $S\subset T$ such that $S\cap Y = \emptyset$ and $\mbox{rk}(S\cup Y) = |S\cup Y| = d(d+1)+1$, that is, $S\cup Y$ is independent. Therefore $|S|> d^2$, and by the pigeon-hole principle there exists some $T_j\subset S$.  This implies that the closed segment $[t,x]$ is contained in $\conv (S\cup Y)$ which is contained in $\mrd$. 
\end{proof}

We also have the following generalization of Theorem \ref{simp-conn}.

\begin{theorem}
  If $r>\frac{d+2}{2}$, then $\mrd$ is simply connected.
\end{theorem}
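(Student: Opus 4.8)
The plan is to run the proof of Theorem~\ref{simp-conn} almost verbatim, replacing the abstract join by the independence complex $\mathcal{I}$ of the matroid $M$ and replacing the free choice of ``missing colors'' by the independence augmentation axiom. Realize $\mrd$ as a map $f\colon\mathcal{I}\to\R^d$ that sends each element of the ground set to the corresponding point and is affine on every simplex, so that $\mrd=f(\mathcal{I})$ and each independent set is carried onto its independent simplex. Since $M$ is a matroid, $\mathcal{I}$ is pure of dimension $r-1$ (its maximal faces are exactly the bases, each of size $r$) and, as recalled above, $(r-2)$-connected; in particular $\mathcal{I}$, and hence $\mrd$, is path connected. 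The case $d=1$ is trivial (there $\mrd$ is a compact interval), so assume $d\ge 2$; then $r>\frac{d+2}{2}\ge 2$ forces $r\ge 3$, so that $\mathcal{I}$ is simply connected, i.e.\ $\pi_1(\mathcal{I})=0$.

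First I would encode an arbitrary loop in $\mrd$, exactly as in Theorem~\ref{simp-conn} and via the nerve theorem, as a cyclic sequence of bases $\sigma_1,\dots,\sigma_k$ whose images meet consecutively, $f(\sigma_i)\cap f(\sigma_{i+1})\neq\emptyset$. The goal is to modify this sequence, without changing the homotopy class of its image, until consecutive \emph{abstract} simplices intersect as well; the result is then the image of an honest loop $\gamma'$ in $\mathcal{I}$, and $\pi_1(\mathcal{I})=0$ finishes the argument. The only obstruction is a consecutive pair of disjoint bases, $\sigma_i\cap\sigma_{i+1}=\emptyset$, whose images nonetheless meet, and the whole content is to repair one such pair by inserting an intermediate basis.

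The hard part will be the geometric reduction that pushes the common point onto a proper face, which I would carry out in coefficient space to avoid degeneracies. Writing $\Delta_\sigma$ for the standard simplex on the vertex set of a basis $\sigma$, consider the bounded polytope $Q=\{(a,b)\in\Delta_{\sigma_i}\times\Delta_{\sigma_{i+1}} : \sum_v a_v f(v)=\sum_w b_w f(w)\}$, which is nonempty precisely because $f(\sigma_i)\cap f(\sigma_{i+1})\neq\emptyset$. It is cut out of the $(2r-2)$-dimensional product $\Delta_{\sigma_i}\times\Delta_{\sigma_{i+1}}$ by the $d$ scalar equations above, so $\dim Q\ge(2r-2)-d\ge 1$, the last inequality being exactly $r>\frac{d+2}{2}$. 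Since $Q$ is compact of positive dimension, at any vertex $(a^*,b^*)$ at least one of the inequalities $a_v\ge 0$, $b_w\ge 0$ must be tight, so some coordinate vanishes. After relabeling assume $a^*_{v_0}=0$ for some $v_0\in\sigma_i$, and set $\tau=\{v\in\sigma_i : a^*_v>0\}\subsetneq\sigma_i$. Then the common point $z=\sum_v a^*_v f(v)$ lies in $f(\tau)$, giving $z\in f(\sigma_i)\cap f(\tau)\cap f(\sigma_{i+1})$. This dimension count is the single place the hypothesis enters, and I expect stating it cleanly to be the main difficulty.

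Finally I would replace the colorful completion of Theorem~\ref{simp-conn} by augmentation. The face $\tau$ is independent with $|\tau|\le r-1<r=|\sigma_{i+1}|$, so the augmentation axiom yields $b\in\sigma_{i+1}\setminus\tau$ with $\tau\cup\{b\}$ independent; extend it to a basis $\tau_i\supseteq\tau\cup\{b\}$. Then $\tau\subseteq\tau_i\cap\sigma_i$ and $b\in\tau_i\cap\sigma_{i+1}$ are nonempty, while $z\in f(\tau)\subseteq f(\tau_i)$ gives $f(\sigma_i)\cap f(\tau_i)\cap f(\sigma_{i+1})\ni z$. By the nerve theorem the sequence obtained by inserting $f(\tau_i)$ between $f(\sigma_i)$ and $f(\sigma_{i+1})$ is homotopic in $\mrd$ to the original, and its two new consecutive pairs now intersect abstractly. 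Repeating over all offending pairs rewrites the loop as $f(\gamma')$ for a loop $\gamma'$ in $\mathcal{I}$; since $\pi_1(\mathcal{I})=0$ the loop is null-homotopic, proving that $\mrd$ is simply connected.
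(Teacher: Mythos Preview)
Your proof is correct and follows essentially the same approach as the paper's: realize $\mrd$ as the image of the independence complex, encode loops via the nerve as sequences of bases, and use augmentation to insert an intermediate basis whenever two consecutive bases are abstractly disjoint but have intersecting images. The paper merely asserts the existence of the proper face $\tau$ (``since $m>\frac{d+2}{2}$''), whereas you supply the underlying dimension count explicitly via the polytope $Q\subset\Delta_{\sigma_i}\times\Delta_{\sigma_{i+1}}$; this is a welcome clarification rather than a different argument.
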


\begin{proof}
The same proof as the one for Theorem \ref{simp-conn} works here. We think of $\mrd$ as the image of a map $f:Y \to \R^d$ where $Y$ is the independence complex of $M$ and $f$ is linear on every simplex of $Y$. The case $d=1$ is trivial since the independence complex of a matroid of rank $r\geq 2$ is connected, so we assume $d\geq 2$.  Let $\sigma_1$ and $\sigma_2$ be two $(r-1)$-simplices of $Y$ such that $\sigma_1\cap \sigma_2 = \emptyset$ and $f(\sigma_1) \cap f(\sigma_2) \neq \emptyset$. Then there is a proper face $\tau$ of either $\sigma_1$ or $\sigma_2$ such that $f(\sigma_1)\cap f(\tau) \cap f(\sigma_2) \neq \emptyset$, and by the independence augmentation axiom, $\tau$ is contained in an $(r-1)$-simplex $\tau_0\in Y$ such that $\sigma_1\cap \tau_0 \neq \emptyset$, $\sigma_2\cap \tau_0 \neq \emptyset$, and $f(\sigma_1)\cap f(\tau_0) \cap f(\sigma_2) \neq\emptyset$. Therefore the same argument as in the proof of Theorem \ref{simp-conn} shows that any path in $\mrd$ is homotopic to the image of some path in $Y$. The result now follows from the fact that the independence complex of a matroid of rank $r\geq 3$ is simply connected. 
\end{proof}

\begin{remark}
It is natural to ask whether there are other reasonable classes of simplicial complexes for which our main Problem might yield interesting results. 
\end{remark}

\section{Acknowledgments}
The authors are grateful to two anonymous referees for helpful comments and suggestions.
I.~B. was partially supported by ERC  Advanced Research Grant no 267165 (DISCONV), and by Hungarian National Research Grant K 83767.
A.~F.~H. was supported by the Basic Science Research Program through the National Research Foundation of Korea funded by the Ministry of Education, Science and Technology (NRF-2010-0021048).
R.~K. was supported by the Dynasty foundation.

\end{document}